\newtheorem{theorem}{Theorem}[section]
\newtheorem{lemma}{Lemma}
\theoremstyle{definition}
\newtheorem{corollary}[theorem]{Corollary}
\theoremstyle{remark}
\newtheorem{remark}{Remark}
\numberwithin{equation}{section}
\lstdefinelanguage{Sage}{
  morekeywords={for, print, in, and, if, else, elif, def, return, lambda, import, from, as, while, break, continue},
  morekeywords={[2]GF, PolynomialRing, modulus, append},
  morecomment=[l]\#,
  morestring=[b]',
  morestring=[b]",
  sensitive=true
}
\tiny\color{gray},
\begin{document}

\title{Orthomorphism Polynomials of degree $7$ over finite fields}
\author{ Bhitali Kousik and Dhiren Kumar Basnet  }
\address{department of mathematical sciences, tezpur univeristy, tezpur 784028, Assam, India}
\email{\textnormal{kousikbhitali2016@gmail.com, dbasnet@tezu.ernet.in}}
\keywords{Permutation Polynomial, Orthomorphism Polynomial, SageMath}
\subjclass[2020]{11T06, 11T55, 12Y05}

\begin{abstract}
    A polynomial $f\in\mathbb{F}_q[x]$ is called an \textit{orthomorphism polynomial} over $\mathbb{F}_q$ if both $f(x)$ and $f(x)-x$ are permutation polynomials over $\mathbb{F}_q.$ In this paper we determine the complete list of non-exceptional orthomorphism polynomials of degree $7$ over all finite fields. In addition, we provide a partial characterization of exceptional orthomorphism polyniomials and establish the non-existence of orthomorphism polynomials in several remaining instances.
\end{abstract}
 
\maketitle
\section{Introduction} 
Let $\mathbb{F}_q$ be the finite field of order $q,$ where $q=p^r,~p$ is a prime and $r\in\mathbb{N}$. We denote the multiplicative group $\mathbb{F}_q\setminus\{0\}$ by $\mathbb{F}_q^*.$ A polynomial $f\in \mathbb{F}_q[x]$ is called a \textit{permutation polynomial} (PP) over $\mathbb{F}_q$ if the associated map $f:c \mapsto f(c)$ permutes $\mathbb{F}_q.$ We call a polynomial $f\in \mathbb{F}_q[x]$ an \textit{orthomorphism polynomial} (OP) if both $f(x)$ and $f(x)-x$ are PP over $\mathbb{F}_q$ and a \textit{complete permutation polynomial} (CPP) if both $f(x)$ and $f(x)+x$ are PP over $\mathbb{F}_q.$ Orthomorphism polynomials are widely used in design theory, particularly in relation to latin squares \cite{evans, wanless}. For more background and applications related to the equivalent concepts of OP and CPP we refer to \cite{ winterhof, wanless2, wanless3}. 

In 1982, Neiderreiter and Robinson \cite{CPP1} gave a complete classification of all CPPs of degree $<6,$ as well as resolved the degree $6$ case for finite fields of order coprime to $6.$ Since the distinction between classifying CPPs and OPs is trivial, the work of Shallue and Wanless \cite{shallue}, which classified all degree $6$ OPs over finite fields of characteristic $2$ and $3$ may be regarded as completing the classification of the remaining degree $6$ cases not covered in \cite{CPP1}. Additionally, they provided the total number of all \textit{canonical orthomorphism polynomials} (OP with zero constant term \cite{wanless2}) over fields of order $q\leq 17,$ classified according to their degree. For example, they found that there are a total of $660\times 11=7260,~494 \times 13=6422$ and $272 \times 17=4624$ OPs of degree $7$ over $\mathbb{F}_{11},~\mathbb{F}_{13}$ and $\mathbb{F}_{17}$ respectively. This observation motivated us to investigate their explicit forms. The present paper furthers the research along this line by characterizing all OPs of degree $7$ over $\mathbb{F}_q$ for $q\in\{11,~17,~19,~23,~25,~31,~49\}$ and $q\equiv 6~(\textup{mod}~7)$ with the total count for $q=11,~13$ and $17$ agreeing with the data of Shallue and Wanless \cite{shallue}.

In our study, mainly the classification of PPs of degree $7$ over $\mathbb{F}_q$ with any odd $q,$ up to linear transformations proposed by Xiang Fan \cite{xfan} will be used along with some computer programs. Two polynomials $f,~g\in \mathbb{F}_q[x]$ are said to be \textit{related by linear transformation} or \textit{linearly related} if there exist $a,~b\in \mathbb{F}_q^*$ and $c,~d\in \mathbb{F}_q$ such that $$g(x)=af(bx+c)+d.$$
Clearly, if $f$ and $g$ are linearly related, then $f$ is a PP over $\mathbb{F}_q$ if and only if so is $g.$ 

The main result of this paper is the classification of all non-exceptional orthomorphism polynomials of degree $7$ over $\mathbb{F}_q,$ which may be summarized as follows.
\begin{theorem}
    For any finite field $\mathbb{F}_q$ of order greater than $7,$ a non-exceptional orthomorphism polynomial of degree $7$ exists if and only if $q\in\{11,~13,~17,~19,~25\}.$ These polynomials along with all the exceptional orthomorphism polynomials of degree $7$ over these fields are explicitly listed in \Cref{thm3.1}.
    \end{theorem}
    We further show that every orthomorphism polynomial over $\mathbb{F}_{7^t},~t\geq 2$  is exceptional and provide an explicit characterization of all such polynomials over $\mathbb{F}_{49}.$

The paper is organized as follows. Section 2 provides the necessary background material. In Section 3, we describe the methodology and then present our main results on degree $7$ orthomorphism polynomials, where we exhaust all the cases for non-exceptional orthomorphism polynomials and the concluding section points towards future work. 
\section{Preliminaries}
We begin this section by outlining some results from Xiang Fan's work \cite{xfan} which will be instrumental in our subsequent work.
\begin{lemma}\cite[Proposition~11]{xfan}\label{l3}
    Any degree $7$ PP over $\mathbb{F}_q$ is linearly related to $x^7$ or to some $G(x)=x^7+\sum_{i=1}^{t}g_ix^i,$ where $1\leq t \leq 5$ and all $g_i\in \mathbb{F}_q$ satisfy the following criteria:
    \begin{enumerate}[label=\textup{(\arabic*)}]
        \item 
        $0\neq g_t\in \textup{CK}_q(7-t)$ and $g_{t-1}\in \textup{CI}_q(7-t).$
        \item 
        If $7\mid q,$ then $g_{t-1}=0.$
        \item 
        If $t=5$ and $g_4=0,$ then $g_2\in \{0\}\cup \textup{CI}_q(2).$ 
        \item 
        If $t=4$ and $g_3=0,$ then $g_2\in \{0\}\cup \textup{CI}_q(3).$
        \item 
        If $t=3,~g_2=0$ and $q\equiv 1 (\textup{mod}~4),$ then $g_1\in \{0\}\cup \textup{CI}_q(2),$
    \end{enumerate}
    where $\textup{CK}_q(m)$ and $\textup{CI}_q(m),~m\in\mathbb{Z}$ were defined in \cite{xfan} as follows:\\
    \hspace*{3 cm} $\textup{CK}_q(m)=\{\theta^i: 0\leq i < \textup{gcd}(m,~q-1)\},$~and\\
 \hspace*{3 cm} $\textup{CI}_q(m)=\{\theta^j: 0\leq j < (q-1)/\textup{gcd}(m,~q-1)\},$ $\theta$ is a generator of $\mathbb{F}_q^*.$

Also, whenever there exists another polynomial $F(x)=x^7+\sum_{i=1}^{s}f_i x^i$ with $1\leq s \leq 5,~s_i\in \mathbb{F}_q$ satisfying the same requirements \emph{(1) \textendash\ (5)}, $G$ and $F$ are linearly related if and only if $G=F.$
\end{lemma}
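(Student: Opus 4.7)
The plan is to exhaust the residual linear-transformation freedom step by step. Starting from a degree $7$ PP $f$, Lemma \ref{l2} already places $f$ (for $\gcd(q,7)=1$) in linear correspondence with some normalised $G(x)=x^7+\sum_{i=1}^{5}g_ix^i$; the case $p=7$ requires a parallel normalisation that I would do by hand, since Lemma \ref{l2} does not apply directly. Setting aside the trivial case $G=x^7$, let $t=\max\{i:g_i\neq 0\}\in\{1,\dots,5\}$.

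The linear transformations preserving the normalised shape (monic, $G(0)=0$, and the $x^6$ coefficient equal to $0$ when $p\neq 7$) are exactly the dilations $G(x)\mapsto b^{-7}G(bx)$ for $b\in\mathbb{F}_q^*$, under which $g_i\mapsto b^{i-7}g_i$. The orbit of $g_t$ is the coset $g_t\cdot(\mathbb{F}_q^*)^{7-t}$, and its representatives form $\textup{CK}_q(7-t)$; choose $b$ so that the new $g_t$ lies in $\textup{CK}_q(7-t)$, establishing the first half of (1). The stabiliser of this choice is the group $\mu_{7-t}$ of $(7-t)$-th roots of unity in $\mathbb{F}_q^*$; for $b\in\mu_{7-t}$ one computes $g_{t-1}\mapsto b^{-1}g_{t-1}$, so representatives of the $\mu_{7-t}$-orbit of $g_{t-1}$ can be chosen in $\{0\}\cup\textup{CI}_q(7-t)$, completing (1).

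Items (3)--(5) handle the subcases where $g_{t-1}=0$, so the stabiliser $\mu_{7-t}$ survives untouched and acts on the next surviving coefficient. A direct computation of the exponent $i-7$ modulo $7-t$ shows that for $(t,i)=(5,2),(4,2),(3,1)$ the induced action on $g_i$ is multiplication by a cyclic group of order $2,3,2$ respectively, giving memberships in $\{0\}\cup\textup{CI}_q(2)$, $\{0\}\cup\textup{CI}_q(3)$, $\{0\}\cup\textup{CI}_q(2)$; the hypothesis $q\equiv 1\pmod 4$ in (5) is precisely what ensures $\mu_4\neq\mu_2$ so that the action is nontrivial. Item (2) follows because in characteristic $7$ the shift parameter $c$ is not consumed by the $x^6$-coefficient normalisation (every $\binom{7}{k}$ with $1\le k\le 6$ vanishes in $\mathbb{F}_q$), so $c$ remains free and can be chosen to solve the linear equation $g_{t-1}+tg_tc=0$, killing $g_{t-1}$.

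For uniqueness, suppose $F,G$ both satisfy (1)--(5) and $G(x)=aF(bx+c)+d$. Matching coefficients of $x^7$, $x^6$ (or $g_{t-1}$ in characteristic $7$), and the constant term forces $a=b^{-7}$, $c=0$, and $d=0$. Matching the normalisations of $g_t$ and $f_t$ inside $\textup{CK}_q(7-t)$ then forces $b\in\mu_{7-t}$, and matching those of $g_{t-1}$ and $f_{t-1}$ via (1), or via one of (3)--(5) when $g_{t-1}=0$, forces $b=1$, whence $F=G$. The main obstacle I anticipate is uniformly folding the characteristic-$7$ case into the same framework as $\gcd(q,7)=1$, together with the delicate bookkeeping required to confirm that the chosen canonical sets are genuine fundamental domains under the cascading stabiliser action; each of the subcases (3)--(5) needs a tailored computation of the induced action on the next surviving coefficient to avoid double-counting or missing a free parameter.
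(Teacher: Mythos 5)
Your statement is quoted by the paper directly from Fan \cite[Proposition~11]{xfan} and is given no proof there, so there is no in-paper argument to compare against; judging your sketch on its own terms, the part with $\gcd(q,7)=1$ is essentially the right (and the natural) argument. The substitutions preserving the normalised shape are indeed the dilations $G(x)\mapsto b^{-7}G(bx)$, acting by $g_i\mapsto b^{i-7}g_i$; $\textup{CK}_q(7-t)$ is a transversal of $g_t(\mathbb{F}_q^*)^{7-t}$, the stabiliser $\mu_{7-t}$ acts on $g_{t-1}$ by $b^{-1}$ with $\textup{CI}_q(7-t)$ a transversal of $\mathbb{F}_q^*/\mu_{7-t}$, and your computation of the residual action in cases (3)--(5), including why $q\equiv 1\pmod 4$ is needed in (5), is correct. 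Two small inaccuracies: the orders ``$2,3,2$'' are only the generic values (they are really $\gcd(2,q-1)$, $\gcd(3,q-1)$, etc., which is harmless because $\textup{CI}_q$ is defined through the gcd), and in the uniqueness step $b=1$ is not forced when all the lower coefficients referenced in (1), (3)--(5) vanish --- one only gets $b\in\mu_{7-t}$, which nevertheless still yields $F=G$.

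The genuine gap is the case $7\mid q$, which you defer to a ``parallel normalisation by hand.'' No such normalisation exists: in characteristic $7$ we have $(bx+c)^7=b^7x^7+c^7$, so under $f\mapsto af(bx+c)+d$ the coefficient of $x^6$ merely rescales by $ab^6\neq 0$ and can never be removed. Hence a degree-$7$ polynomial over $\mathbb{F}_{7^k}$ with nonzero $x^6$-coefficient is not linearly related to any $x^7+\sum_{i=1}^{5}g_ix^i$, and reaching the stated form in characteristic $7$ genuinely requires the permutation hypothesis, not just orbit bookkeeping. For $q=49$ one can supply this via Hermite's criterion with exponent $8$: writing $f=x^7+a_6x^6+\cdots$, one has $f^7=x^{49}+a_6^7x^{42}+\cdots$, so the coefficient of $x^{48}$ in $f^8\bmod (x^{49}-x)$ is $a_6^{8}$, which must vanish for a PP, forcing $a_6=0$; for $q=7^k$ with $k\ge 3$ this exponent gives nothing and one has to import Fan's (or M\"uller's) analysis of exceptional degree-$7$ polynomials. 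The same omission resurfaces in your uniqueness argument, where ``matching the $x^6$ coefficient'' carries no information in characteristic $7$ and forcing $c=0$ needs condition (2) together with a separate argument. Since the lemma is cited rather than proved in the paper, this does not affect the paper itself, but as written your sketch would not establish the characteristic-$7$ part of the statement.
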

An \textit{exceptional polynomial} over $\mathbb{F}_q$ refers to a polynomial in $\mathbb{F}_q[x]$ which is a PP over infinitely many extensions of $\mathbb{F}_q$ and a \textit{non-exceptional polynomial} is one that does not satisfy this condition.
Now we present one of the key results of \cite{xfan}, an essential basis of our work.
\begin{lemma}\cite[Theorem~13]{xfan}\label{l4}
    Let $A=\{11,~13,~17,~19,~23,~25,~31\}.$ A non-exceptional PP over $\mathbb{F}_q$ of characteristic $p\notin\{2,~3,~7\}$ and $q>7$ exists if and only if $q\in A.$ Furthermore, a polynomial $F(x)\in\mathbb{F}_q[x]$ is a non-exceptional PP of degree $7$ over $\mathbb{F}_q$ if and only if it is linearly related to some $x^7+\sum_{i=1}^{5}f_ix^i$ with $(f_5,~f_4,~f_3,~f_2,~f_1)\in \mathbb{F}_q^5$ listed as follows. This list is exhaustive and non-redundant, meaning it provides a one-to-one correspondence between all the tuples $(f_5,~f_4,~f_3,~f_2,~f_1)$ and all the linearly related classes of non-exceptional PPs of degree $7$ over $\mathbb{F}_q.$
   \begin{displaymath}
 \begin{matrix*}[l]
 q=11:& (0,~0,~0,~5,~0),&(0,~0,~0,~8,~0),& (0,~1~,0,~0,~4),&(0,~1,~0,~8,~5), \\
 &(0,~1,~0,~9,~5),& (1,~0,~5,~0,~2),&(1,~0,~5,~0,~7),&(1,~0,~5,~8,~6)\\
 &(1,~1,~5,~2,~5),&(1,~2,~5,~9,~8),& (1,~4,~5,~8,~0),&(1,~8,~5,~8,~0),\\
 &(1,~5,~5,~1,~5),&(2,~0,~9,~0,~8),& (2,~0,~9,~0,~9),& (2,~0,~9,~4,~4),\\
 &(2,~1,~9,~5,~3),& (2,~2,~9,~5,~8),&(2,~4,~9,~8,~3),&(2,~8,~9,~5,~2), \\
&(2,~8,~9,~7,~8),&(2,~8,~9,~8,~1), &(2,~5,~9,~5,~2),& (2,~5,~9,~6,~1),\\
&(2,~5,~9,~8,~3).& & & \\
q=13:& (0,~0,~0,~0,~2), &(0,~0,~0,~0,~6), & (0,~0~,2,~0,~8),&(0,~0,~4,~0,~4),\\ & (0,~0,~8,~0,~3),& (0,~1,~0,~0,~2) &(0,~1,~1,~10,~5),&(0,~1,~2,~1,~0),\\
& (0,~1,~2,~3,~9), &(0,~1,~8,~7,~11),& (0,~2,~1,~0,~8), &(0,~4,~0,~0,~6),\\
& (0,~4,~1,~7,~1), &(0,~4,~4,~3,~3). & &  \\
q=17:& (0,~1,~10,~0,~16), &(1,~0,~6,~0,~11), & (1,~0,~7,~0,~0),&(1,~0,~13,~0,~7),\\
& (1,~0,~13,~0,~14),& (1,~0,~14,~0,~3) &(1,~3,~13,~11,~10),&(1,~10,~3,~14,~11)\\
& (3,~0,~7,~0,~4), &(3,~0,~10,~0,~14),& (3,~0,~12,~0,~0), &(3,~0,~14,~0,~8),\\
& (3,~0,~15,~0,~2), &(3,~9,~11,~14,~10), & (3,~9,~12,~14,~5),& (3,~9,~15,~14,~12),\\
&(3,~15,~10,~12,~4).&  &  & \\
q=19:& (0,~0,~0,~0,~16), &(1,~0,~3,~14,~11), & (1,~0~,5,~0,~4),&(1,~0,~7,~0,~11),\\
& (1,~0,~11,~0,~16),& (1,~0,~18,~9,~4) &(2,~0,~14,~0,~5),  &(2,~0,~16,~0,~9)\\
& (2,~0,~17,~0,~5). & & & \\
q=23:& (1,~1,~0,~4,~9), &(1,~5,~11,~5,~9), & (1,~2~,6,~19,~21).&  \\
q=25:& (0,~0,~0,~0,~\theta), &(0,~0,~0,~0,~\theta^5), & (\theta,~0~,\theta^2,~0,~0),& with~\theta=\theta^2+2~in~ \mathbb{F}_{25}.  \\
q=31:& (1,~0,~16,~0,~2), &(1,~17,~25,~25,~29), & (3,~1~,14,~19,~10).&   \\ 
\end{matrix*}
\end{displaymath}
\end{lemma}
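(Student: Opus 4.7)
The plan is to combine the normalisation results of Lemma~\ref{l2} and Lemma~\ref{l3} with the Hasse--Weil bound to first restrict the admissible values of $q$, and then to finish by a finite enumeration over the normalised forms.

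For the non-existence direction I would argue as follows. If $f\in\mathbb{F}_q[x]$ is a non-exceptional PP of degree $7$, then the substitution polynomial $\phi_f(x,y):=(f(x)-f(y))/(x-y)$ admits at least one absolutely irreducible factor over $\mathbb{F}_q$. I would bound the geometric genus of such a factor by analysing the possible arithmetic/geometric monodromy groups of $f$ as transitive subgroups of $S_7$, excluding those classically known to force exceptionality (the affine groups through $\mathrm{AGL}_1(\mathbb{F}_7)$ and the other solvable primitive groups of degree $7$). Applying the Hasse--Weil bound to the resulting curve yields an explicit upper bound on $q$; combining this with the hypothesis $p\notin\{2,3,7\}$ and a direct check of the remaining values $q\in\{27,29\}$ (the only values with $7<q\leq 31$, $p\notin\{2,3,7\}$, outside the set $A$) proves the ``only if'' direction.

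For the enumeration direction, Lemma~\ref{l3} reduces the classification of degree-$7$ PPs over $\mathbb{F}_q$ up to linear equivalence, for each $q\in A$, to a finite search over normalised forms $x^7+\sum_{i=1}^{t}g_i x^i$ whose coefficients lie in the explicit sets $\textup{CK}_q(\cdot)$ and $\textup{CI}_q(\cdot)$. I would iterate over these candidates in SageMath, test the permutation property by direct evaluation, and then filter out exceptional PPs (which over $\mathbb{F}_q$ of characteristic $p\neq 7$ admit a classical description via compositions of linearised and Dickson polynomials, and can be discarded with a separate routine). The uniqueness clause of Lemma~\ref{l3} then ensures that the tabulated tuples are in one-to-one correspondence with the linear-equivalence classes of non-exceptional PPs of degree $7$.

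The hardest step I expect is the Hasse--Weil bound on $q$: producing a clean, uniform bound across all possible monodromy types of a degree-$7$ polynomial is delicate, and it is precisely this step that pins the crucial boundary $q\leq 31$. Once this is in place, the remaining work for each small $q$ is essentially mechanical, although for $q=31$ and $q=25$ the candidate list is already sizeable and requires careful automated bookkeeping, in particular to avoid double-counting coefficient tuples that violate the constraints (1)--(5).
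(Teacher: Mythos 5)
This lemma is not proved in the paper at all: it is imported verbatim from Fan \cite[Theorem~13]{xfan}, so what you have written is really a sketch of a reconstruction of Fan's argument. In broad outline your route matches his (normalise via Lemma~\ref{l2}/Lemma~\ref{l3}, use the Weil bound on an absolutely irreducible factor of $(f(x)-f(y))/(x-y)$ to bound $q$, enumerate the normalised candidates by machine, and remove the exceptional classes via M\"{u}ller's classification, cf.\ Remark~\ref{r1}), but there is a genuine quantitative gap at the step you yourself flag as the hardest. No Hasse--Weil or monodromy analysis pins the boundary at $q\leq 31$: an absolutely irreducible factor of the degree-$6$ curve $(f(x)-f(y))/(x-y)$ has genus at most $10$, and the resulting inequality only forces $q$ below a bound on the order of a few hundred (Fan works with such a bound and then searches all admissible $q$ beneath it). Consequently your plan to finish the ``only if'' direction by a ``direct check of the remaining values $q\in\{27,29\}$'' is not a finite tidy-up of two cases but silently presupposes the sharp cutoff $q\leq 31$, which is itself a computational fact: one must run the normalised-form search of Lemma~\ref{l3} for \emph{every} prime power $q$ with $p\notin\{2,3,7\}$ between $7$ and the Weil-type bound (that is $q=29,37,41,\dots$, including the relevant prime powers such as $121$, $125$, $169$) and verify that no non-exceptional PP of degree $7$ survives. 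Also, $q=27$ has characteristic $3$ and is excluded by hypothesis, so it does not belong on your residual list at all; it is treated separately in \cite[Theorem~14]{xfan} (quoted here as Theorem~\ref{thmm}). The enumeration half of your proposal, including the appeal to the uniqueness clause of Lemma~\ref{l3} for non-redundancy of the tables, is sound and is essentially what Fan does.
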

\begin{remark} \label{r1}
    The author also listed  all the exceptional PPs of degree $7$ over $\mathbb{F}_q$ for the above $q,$ up to linear transformations, (previously classified by M\"{u}ller \cite[Theorem~4]{muller}) as $x^7+\sum_{i=1}^{5}g_ix^i$ with $(g_5,~g_4,~g_3,~g_2,~g_1)\in \mathbb{F}_q^5$ as follows.
    \begin{displaymath}
        \begin{matrix*}[l]
        q=11:&(0,~0,~0,~0,~0),& (1,~0,~5,~0,~9), & (2,~0,~9,~0,~6).\\
        q=13:&(0,~0,~0,~0,~0).&  & \\
        q=17:&(0,~0,~0,~0,~0),& (1,~0,~10,~0,~8), & (3,~0,~5,~0,~11).\\
        q=19:&(0,~0,~0,~0,~0),& (1,~0,~3,~0,~7), & (2,~0,~12,~0,~18).\\
        q=23:&(0,~0,~0,~0,~0),& (1,~0,~20,~0,~8), & (5,~0,~17,~0,~11).\\
         q=25:&(0,~0,~0,~0,~0),& (1,~0,~1,~0,~4), & (\theta,~0,~\theta^2,~0,~4\theta^3).\\
        q=31:&(0,~0,~0,~0,~0),& (1,~0,~18,~0,~19), & (3,~0,~7,~0,~17).\\
       
        \end{matrix*}
    \end{displaymath}
\end{remark}
\begin{remark}\label{r2}
From \Cref{l3}, we know that any PP $H(x)=\sum_{i=0}^{7}h_ix^i$ over $\mathbb{F}_q$ with $h_7\neq 0$ can be normalized into $F(x)=x^7+\sum_{i=1}^{5}f_ix^i$ that satisfies all the criteria of \Cref{l3}. For $q\in \{11,~13,~17,~19,~23,~25,~31\},~H$ is a PP over $\mathbb{F}_q$ if and only if $F$ is listed in \Cref{l4} or \Cref{r1} and such a $F$ is unique.
\end{remark}

As stated earlier, if $f$ and $g$ in $\mathbb{F}_q[x]$ are two linearly related polynomials, then $f$ is a PP over $\mathbb{F}_q$ implies so is $g$ and vice versa. However, the same does not hold for OPs. For example, $f(x)=4x+1\in \mathbb{F}_5[x]$ is an OP over $\mathbb{F}_5,$ but $f(4x)$ or $4f(x)$ is not. This was the primary difficulty we faced during our classification process. The following trivial lemma offers some help with this problem.
\begin{lemma} \cite{CPP1} \label{l5}
    If $f\in\mathbb{F}_q[x]$ is an orthomorphism polynomial over $\mathbb{F}_q,$ then for all $c,~d\in\mathbb{F}_q$ the same holds for $f(x+ c)+ d$ also.
\end{lemma}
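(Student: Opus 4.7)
The plan is to verify directly, from the definition, that $g(x) := f(x+c)+d$ satisfies both conditions required of an orthomorphism polynomial: that $g(x)$ is a PP and that $g(x)-x$ is a PP.

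For the first condition, I would observe that $g$ is obtained from $f$ by precomposing with the bijection $x\mapsto x+c$ on $\mathbb{F}_q$ and then postcomposing with the bijection $y\mapsto y+d$. Since $f$ itself is assumed to be a PP, the composition of three bijections is a bijection, hence $g$ is a PP over $\mathbb{F}_q$.

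The second (and slightly less immediate) step is to show $g(x)-x$ is a PP. The key algebraic identity I would use is
\begin{equation*}
g(x)-x \;=\; f(x+c)+d-x \;=\; \bigl(f(x+c)-(x+c)\bigr)+(c+d).
\end{equation*}
Setting $h(y):=f(y)-y$, which is a PP by hypothesis that $f$ is an OP, this identity rewrites $g(x)-x$ as $h(x+c)+(c+d)$. This is again a PP, since it is the composition of the bijection $x\mapsto x+c$, the PP $h$, and translation by the constant $c+d$.

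There is no real obstacle here; the lemma is essentially a bookkeeping fact about how the conditions defining an OP behave under the two simplest symmetries (additive shifts of the input and of the output). The only thing one needs to be careful about is the sign arising from subtracting $x$ rather than $x+c$, which is handled by the rearrangement above and explains why additive shifts preserve the OP property whereas, as the authors emphasize in the paragraph preceding the lemma, multiplicative scalings generally do not.
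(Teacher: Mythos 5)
Your proof is correct: the identity $g(x)-x = \bigl(f(x+c)-(x+c)\bigr)+(c+d)$ is exactly the point, and composing with the translations $x\mapsto x+c$ and $y\mapsto y+(c+d)$ preserves the permutation property. The paper itself gives no proof of this lemma (it is stated as trivial and cited to Neiderreiter and Robinson), and your argument is the standard one that would be supplied.
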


Finally, we state one more lemma that will be used to enumerate the total number of OPs of degree $7$ over $\mathbb{F}_q$ in the next section.
\begin{lemma}\label{l6}
    Let $F(x)= x^7+\sum_{i=1}^{5}f_ix^i$ be a polynomial over $\mathbb{F}_q.$ Define $g(x)=\alpha F(\beta x)$ and $h(x)=\alpha_1 F(\beta_1 x),$ where $\alpha,~\beta,~\alpha_1,~\beta_1 \in \mathbb{F}_q^*.$~If $g( x+\gamma)+\delta= h(x+\gamma_1)+\delta_1$ for all $x\in \mathbb{F}_q$ with $\gamma,~\delta,~\gamma_1,~\delta_1 \in \mathbb{F}_q,$ then we must have $\gamma=\gamma_1$ and $\delta=\delta_1.$ 
\end{lemma}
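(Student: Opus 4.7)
The plan is to elevate the hypothesised functional identity to a polynomial identity in $\mathbb{F}_q[x]$ and then extract both equalities by comparing just two coefficients. First, I would note that both sides of $g(x+\gamma)+\delta = h(x+\gamma_1)+\delta_1$ are polynomials of degree exactly $7$, with nonzero leading coefficients $\alpha\beta^7$ and $\alpha_1\beta_1^7$ respectively. Since they agree as functions on all of $\mathbb{F}_q$ and each has degree at most $7 < q$ in the regime considered in the paper (where $q \geq 11$), they must coincide coefficient by coefficient in $\mathbb{F}_q[x]$.

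Next I would compare the top two coefficients. Matching the $x^7$ coefficients gives $\alpha\beta^7 = \alpha_1\beta_1^7$. To match the $x^6$ coefficients I would use the structural fact that $F(x) = x^7 + \sum_{i=1}^5 f_i x^i$ has no $x^6$ term, and therefore neither does $g(x)=\alpha F(\beta x)$ nor $h(x)=\alpha_1 F(\beta_1 x)$. Thus the $x^6$ coefficient of $g(x+\gamma)$ arises only from expanding $\alpha\beta^7(x+\gamma)^7$, giving $7\alpha\beta^7\gamma$; an identical computation yields $7\alpha_1\beta_1^7\gamma_1$ on the other side. Combining with the preceding equality and cancelling the common nonzero factor $7\alpha\beta^7$ forces $\gamma=\gamma_1$.

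Once $\gamma=\gamma_1$ is in hand, $\delta=\delta_1$ drops out by a single evaluation: specialising the polynomial identity at $x=-\gamma$ collapses the LHS to $g(0)+\delta$ and the RHS to $h(0)+\delta_1$, and since $F(0)=0$ we have $g(0)=h(0)=0$, so $\delta=\delta_1$ follows at once.

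The only delicate step is the cancellation of the factor $7$, which requires $\operatorname{char}\mathbb{F}_q \neq 7$; this is precisely the regime in which Xiang Fan's normalisation (\Cref{l2}) applies, so the argument suffices in every context where the present lemma will be invoked.
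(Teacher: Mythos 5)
Your argument is correct, and its first half is the same as the paper's: equate the $x^7$ coefficients to get $\alpha\beta^7=\alpha_1\beta_1^7$, then use the absence of an $x^6$ term in $F$ so that the $x^6$ coefficient comes only from $(x+\gamma)^7$, giving $\binom{7}{1}\alpha\beta^7\gamma=\binom{7}{1}\alpha_1\beta_1^7\gamma_1$ and hence $\gamma=\gamma_1$. Where you genuinely depart from the paper is the second conclusion: the paper keeps comparing coefficients of $x^5,\dots,x$ (obtaining $\alpha\beta^i f_i=\alpha_1\beta_1^i f_i$ for each $i$, with the intermediate equations omitted ``for brevity'') and only then equates constant terms to extract $\delta=\delta_1$, whereas you get it in one stroke by substituting $x=-\gamma$ and using $F(0)=0$, which is shorter and avoids the omitted cascade entirely. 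You are also more careful on two points the paper passes over silently: you justify upgrading the functional identity to a coefficient-wise identity (degree $7<q$), and you state explicitly that cancelling the factor $7$ requires the characteristic to differ from $7$ --- the paper's own proof cancels $\binom{7}{1}$ without comment, so it carries the same hidden hypothesis. One small caution about your closing remark: it is not quite true that characteristic $\neq 7$ covers ``every context where the lemma will be invoked,'' since the paper later reuses exactly this distinctness count over $\mathbb{F}_{49}$ (characteristic $7$), where the $x^6$ step degenerates because $(x+\gamma)^7=x^7+\gamma^7$ and the statement can in fact fail (e.g.\ $F=x^7$, $\gamma\neq\gamma_1$ compensated by $\delta,\delta_1$). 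That is a limitation shared by the paper's proof rather than a flaw in yours, but your caveat is doing real work there, not just bookkeeping.
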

\begin{proof}
Equating the coefficients of all powers of $x$ in the given equality with some simplification yields the required result. Details are left to the reader.

\end{proof}
\section{Main Results}
For ease of calculation, we present our discussions step by step, according to $q.$ The main methodology is detailed for the case $q=13.$ Similar approach applies to other values of $q$, but to avoid repetition, we do not elaborate on each case here.
\vspace{0.5 cm}
\\
\underline{\Large \textbf{$q=13$~:}}
\vspace{0.2cm}
\\
We combine \Cref{l4} and \Cref{r1} to relist all PPs of the prescribed form $f(x)=x^7+\sum_{i=1}^{5}a_ix^i$ over $\mathbb{F}_{13}$ up to linear transformations in terms of the families of tuples $(a_5,~a_4,~a_3,~a_2,~a_1)\in\mathbb{F}_{13}^5$ as follows:
\begin{equation}
\begin{matrix*}[l]
     \textup{Family 1}:(0,~0,~0,~0,~2), & \textup{Family 2}:(0,~0,~0,~0,~6),\\
     \textup{Family 3}: (0,~0,~2,~0,~8),&\textup{Family 4}:(0,~0,~4,~0,~4), \\
     \textup{Family 5}:(0,~0,~8,~0,~3),& \textup{Family 6}: (0,~1,~0,~0,~2),\\
     \textup{Family 7}:(0,~1,~1,~10,~5),& \textup{Family 8}:(0,~1,~2,~1,~0),\\
 \textup{Family 9}:(0,~1,~2,~3,~9),& \textup{Family 10}:(0,~1,~8,~7,~11),\\
 \textup{Family 11}:(0,~2,~1,~0,~8), & \textup{Family 12}:(0,~4,~0,~0,~6),\\
 \textup{Family 13}:(0,~4,~1,~7,~1),& \textup{Family 14}:(0,~4,~4,~3,~3), \\
 \textup{Family 15}:(0,~0,~0,~0,~0). &  \\
\end{matrix*}
\label{eq1}
\end{equation}

For example, Family 1 represents the polynomial $f_1(x)=x^7+2x.$ We first check whether $f_1$ is an OP or not. For this, it will be sufficient to check whether $h_1(x)=f_1(x)-x=x^7+x$ is a PP or not. According to \Cref{r2}, $h_1$ is a PP if and only if it is linearly related to one of the families listed above. As it satisfies all requirements (1) \textendash (5) of \Cref{l3}, it follows that $h_1,$ represented by the tuple $(0,~0,~0,~0,~1)$ must be equal to one of these families. Clearly, this is not possible and therefore, $f_1$ can not be an OP.

Next, we consider Family 2, which represents the polynomial $f_2(x)=x^7+6x.$ Define $h_2(x)$ as $f_2(x)-x=x^7+5x.$ Does it satisfy the requirements of \Cref{l3}? As we know that $2$ is a generator of $\mathbb{F}_{13}^*,$ so by definition of $\textup{CK}_q(m)$, we have
\begin{equation*}
    \begin{aligned}
        \text{CK}_{13}(7-1)=\text{CK}_{13}(6)&=\{2^i: 0\leq i < \text{gcd}(6,~12)\},\\
        &=\{1,~2,~3,~4,~6,~8\}.
    \end{aligned}
\end{equation*}
As $5\notin\text{CK}_{13}(7-1),$ it follows that $h_2$ does not satisfy condition (1) of \Cref{l3} and hence it need not be equal to any of the families of those tuples even if $h_2$ is a PP. \\The immediate next step is to determine whether $h_2$ is linearly related to any of these families. Suppose that $h_2(x)=af(bx+c)+d,$ where $~a,b\in\mathbb{F}_{13}^*;~c,~d\in\mathbb{F}_{13}$ and $f(x)=x^7+\sum_{i=1}^{5}a_ix^i$ is a polynomial over $\mathbb{F}_{13}$ represented by one of those families listed in (\ref{eq1}). We compare the coefficients of all powers of $x$ on both sides of this equality to obtain
$$1=ab^7,~0=c=aa_5b^5=aa_4b^4=aa_3b^3=aa_2b^2,~5=aa_1b~\text{and}~0=d.$$
Clearly, $a_5=a_4=a_3=a_2=0$ and $a_1\neq 0.$ Therefore, if $h_2$ is a PP, then it must be linearly related to one of the families of the form $(0,~0,~0,~0,~a_1)$ with $a_1\in\mathbb{F}_{13}^*.$ By observing the list (\ref{eq1}), it is evident that $a_1$ can be either $2$ or $6.$ Finally it requires to solve the following system of equations over $\mathbb{F}_{13}^*:$
\begin{equation}
    \begin{matrix*}[l]
         ab^7=1, & 5=2ab,
    \end{matrix*}
\end{equation}
and
\begin{equation}
    \begin{matrix*}[l]
         ab^7=1, & 5=6ab.
    \end{matrix*}
\end{equation}
These equations can be solved either manually or with the aid of a computer program (the code is provided in the next page). After solving these, we found that none of them is solvable over $\mathbb{F}_{13}^*.$ This means that $h_2$ is not a PP, and consequently $f_2$ cannot be an OP. 

Using similar processes, we went through all the remaining families but could not find any OPs. This observation led us to investigate their linearly related classes. For each family $f$ we only need to determine the values of $\alpha$ and $\beta$ belonging to $\mathbb{F}_{13}^*$ for which $\alpha f(\beta x)$ is an OP. By \Cref{l5}, each such pair $(\alpha,~\beta)$ yields an OP $\alpha f(\beta (x+\gamma))+\delta$, for all $\gamma,~\beta\in\mathbb{F}_{13}.$ We begin this process with Family 1.
\vspace{0.2 cm}\\
\textbf{Family 1:}~$(0,~0,~0,~0,~2)$\\ 
 We define 
$$f_1(x)=x^7+2x,~\textup{the polynomial represented by Family 1},$$
$$g_1(x)=\alpha f_1(\beta x)=\alpha \beta^7x^7+2\alpha \beta x,~~~\text{and}$$
$$h_1(x)=\alpha f_1(\beta x)-x=\alpha \beta^7x^7+(2\alpha \beta-1) x.$$
Suppose $h_1$ is a PP. Then it must be linearly related to one of the families of  (\ref{eq1}), say $f'$. It means that there must exist $a,~b\in\mathbb{F}_{13}^*$ and $c,~d\in\mathbb{F}_{13}$ such that
\begin{equation*}
    \begin{aligned}
        h_1(x)&=af'(bx+c)+d
        \end{aligned}
\end{equation*}
which implies that,
\begin{equation*}
    \begin{aligned}
        \alpha \beta^7x^7+(2\alpha \beta-1) x&=a(bx+c)^7+a\sum_{i=1}^{5}a_i(bx+c)^i+d.
        \end{aligned}
\end{equation*}
Comparing the coefficients of all powers of $x,$ we get
$$\alpha \beta^7=ab^7,~0=c=aa_5b^5=aa_4b^4=aa_3b^3=aa_2b^2,~2\alpha \beta-1=aa_1b~\text{and}~0=d.$$
It follows from these conditions that $f'$ must come from the families of the form $(0,~0,~0,~0,~a_1)$ with $a_1\in\mathbb{F}_{13},$ namely Family 1 $(a_1=2),$ Family 2 $(a_1=6)$ and Family 15 $(a_1=0).$ Ultimately, our problem reduces to solving the following systems of equations:
\begin{equation}\label{eq4}
    \begin{matrix*}[l]
         \alpha \beta ^7=ab^7, & 2\alpha \beta -1=2ab,
    \end{matrix*}
\end{equation}
\begin{equation}\label{eq5}
    \begin{matrix*}[l]
         \alpha \beta ^7=ab^7, & 2\alpha \beta -1=6ab,
    \end{matrix*}
\end{equation}
and
\begin{equation}\label{eq6}
    \begin{matrix*}[l]
         \alpha \beta ^7=ab^7, & 2\alpha \beta -1=0.
    \end{matrix*}
\end{equation}
 To exhaust all the OPs from this family, we only need to determine the values of $\alpha$ and $\beta.$ We run the following code \textbf{OP7(13)} in SageMath to find all the possible values of $a,~b,~\alpha$ and $\beta.$~Let us start by working on (\ref{eq4}).
 \vspace{0.5 cm}
 \hrule
 \vspace{0.2 cm}
 \textbf{OP7(13)}:
 \vspace{0.2 cm}
 \vspace{0.1 cm}
 \begin{lstlisting}
 # Define base field and extension
F13 = GF(13)

# Find all nonzero elements of F13
nonzero = [z for z in F13 if z != 0]

# Store all solutions
solutions = []

# Step 1: Brute-force all (a, b, A, B) in (F13^*)^4. We use A and B in place of alpha and beta respectively
for a in nonzero:
    for b in nonzero:
        ab7 = a * b^7
        ab = a * b
        for A in nonzero:
            for B in nonzero:
                AB7 = A * B^7
                AB = A * B
                # Check both conditions of equation (3.4)
                if AB7 == ab7 and 2 * AB - 1  == 2 * ab:
                    solutions.append((a, b, A, B))

# Print all solutions
print(f"\nTotal tuples of the form (a, b, A, B) found: {len(solutions)} and they are \n")

for i, (a, b, A, B) in enumerate(solutions, 1):
    print(f"{i}: ({a}, {b}, {A}, {B})")

# Step 2: From all those solutions avoid repetitive polynomials
seen_AB = set()
final_solutions = []

for a, b, A, B in solutions:
    AB_pair = (A*B^7, A*B)
    if AB_pair not in seen_AB:
        seen_AB.add(AB_pair)
        final_solutions.append((a, b, A, B))

# Print the final solutions
print(f"\nFinal values of (A, B):\n")
for i, (a, b, A, B) in enumerate(final_solutions, 1):
    print(f"{i}: (A = {A}, B = {B})")

print(f"\nTotal number of unique solutions: {len(final_solutions)}")
\end{lstlisting}
The output of this program from Step 1 consists of $72$ tuples of the form $(a,~b,~\alpha,~\beta).$ Among them are $(1,~3,~2,~5),~(1,~3,~5,~2),~(1,~3,~6,~2),~(3,~1,~2,~5)$ and many more. Clearly, the first and fourth tuples yield the same pair $(\alpha,~\beta)$ and thus the same polynomial. Although the first and second tuples give us distinct pairs of $(\alpha,~\beta),$ the corresponding values of $a$ and $b$  indicate that they represent the same polynomial $g_1(x)=\alpha \beta^7x^7+2\alpha \beta x=ab^7x^7+(2ab+1)x;~a=1,~b=3.$ ~Step 2 is utilized at this point to discard all tuples producing the same values for both $\alpha\beta^7$ and $\alpha\beta,$ retaining only one representative from each such tuple. The final values of $(\alpha,~\beta)$ obtained from Step 2 are $(2,~5)$ and $(1,~10)$ and both these pairs give rise to two distinct orthomorphism polynomials which are $2f_1(5x)=3x^7+7x$ and $f_1(10x)=10x^7+10x.$ 

We apply similar processes to both systems of equations (\ref{eq5}) and (\ref{eq6}). In each case, orthomorphism polynomials are found to exist. Specifically, (\ref{eq5}) yields four pairs of $(\alpha,~\beta)$, whereas from (\ref{eq6}) we obtain two of such pairs. According to \Cref{l5}, each such pair $(\alpha,~\beta)$ generates OPs of the form $\alpha f_1(\beta(x+\gamma))+\delta,~\text{for all choices of}~\gamma,~\delta \in \mathbb{F}_{13}.$ Furthermore, \Cref{l6} ensures that all these polynomials are distinct. Corresponding to each such pair $(\alpha,~\beta)$, there are $13$ possible choices for both $\gamma$ and $\delta.$ Since there are eight pairs in total, this yields $8\times 13\times 13=1352$ distinct OPs from Family 1. 

This forms our main methodology. For each of the remaining families over $\mathbb{F}_{13}$ we apply the same technique with minor changes to the code to obtain our results. For brevity, we specify only the number of pairs $(\alpha,~\beta)$ associated with each family, whenever they exist along with those corresponding systems of equations which are solvable over $\mathbb{F}_{13}^*.$ The explicit forms of these pairs will be presented in the subsequent theorem.
\vspace{0.3 cm}\\
\textbf{Family 2:}~$(0,~0,~0,~0,~6)$ 
\begin{equation}\label{eq8}
    \begin{matrix*}[l]
         \alpha \beta ^7=ab^7, & 6\alpha \beta -1=2ab,
    \end{matrix*}
\end{equation}
\begin{equation}\label{eq9}
    \begin{matrix*}[l]
         \alpha \beta ^7=ab^7, & 6\alpha \beta -1=6ab,
    \end{matrix*}
\end{equation}
\begin{equation}\label{eq10}
    \begin{matrix*}[l]
         \alpha \beta ^7=ab^7, & 6\alpha \beta -1=0.
    \end{matrix*}
\end{equation}
\begin{itemize}
    \item 
    (\ref{eq8}) : $4$ pairs, (\ref{eq9}) : $2$ pairs,  (\ref{eq10}) : $2$ pairs.
\end{itemize}
\textbf{Family 3:}~$(0,~0,~2,~0,~8)$ 
\begin{equation}\label{eq11}
    \begin{matrix*}[l]
         \alpha \beta ^7=ab^7, & 2\alpha \beta^3=2ab^3, & 8\alpha\beta -1=8ab,
    \end{matrix*}
\end{equation}
\begin{itemize}
    \item 
    (\ref{eq11}) : $6$ pairs.
\end{itemize}
\textbf{Family 4:}~$(0,~0,~4,~0,~4)$ 
\begin{equation}\label{eq15}
    \begin{matrix*}[l]
        \alpha \beta ^7=ab^7, & 4\alpha \beta^3=4ab^3, & 4\alpha\beta -1=4ab,
    \end{matrix*}
\end{equation}
\begin{itemize} 
    \item 
    (\ref{eq15}) : $6$ pairs.
\end{itemize}
\textbf{Family 5:}~$(0,~0,~8,~0,~3)$ 
\begin{equation}\label{eq19}
    \begin{matrix*}[l]
         \alpha \beta ^7=ab^7, & 8\alpha \beta^3=8ab^3, & 3\alpha\beta -1=3ab.
    \end{matrix*}
\end{equation}
\begin{itemize}
    \item 
    (\ref{eq19}) : $6$ pairs.
\end{itemize}
\textbf{Family 15:}~$(0,~0,~0,~0,~0)$ 
\begin{equation}\label{eq20}
    \begin{matrix*}[l]
         \alpha \beta ^7=ab^7, & 12=2ab,
    \end{matrix*}
\end{equation}
\begin{equation}\label{eq21}
    \begin{matrix*}[l]
        \alpha \beta ^7=ab^7, &  12=6ab.
    \end{matrix*}
\end{equation}
\begin{itemize}
    \item 
    (\ref{eq20}) : $2$ pairs, (\ref{eq21}) : $2$ pairs
\end{itemize}
No additional pairs were found in the remaining families. Thus, we have found a total of $8+8+6+6+6+4=38$ pairs of $(\alpha,~\beta)$ from all the families resulting in total $38\times 13 \times 13=6422$ orthomorphism polynomials over $\mathbb{F}_{13},$ of which four are exceptional polynomials belonging to Family 15. These data match exactly with \cite[Table~3.2]{shallue}.

Subsequently, the same methodology is applied to $q=11,~17,~19,~23,~25~\text{and}~31,$ with minor modifications to the code \textbf{OP7(13)}. As in the case $q=13$, we list the number of pairs $(\alpha,~\beta)$ for each family (only when they exist) written in terms of the tuples as in \Cref{l4}  together with those associated systems of equations which are solvable over their respective fields for each $q\in\{11,~17,~19,~23,~25,~31\}.$ 
\vspace{0.3 cm}
\\
\underline{\Large \textbf{$q=11$~:}}
\vspace{0.2cm}
\begin{enumerate}[label=\textbf{(\arabic*)}]
    \item 
    $(1,~0,~5,~0,~2)$ 
\begin{equation}\label{eq23}
    \begin{matrix*}[l]
          \alpha \beta ^7=ab^7, & \alpha \beta ^5=ab^5, & 5\alpha \beta ^3=5ab ^3, & 2\alpha\beta-1=7ab,
    \end{matrix*}
\end{equation}
\begin{equation}\label{eq26}
    \begin{matrix*}[l]
          \alpha \beta ^7=ab^7, & \alpha \beta ^5=ab^5 & 5\alpha \beta ^3=5ab ^3, & 2\alpha\beta-1=9ab,
    \end{matrix*}
\end{equation}

\begin{itemize}
    \item 
    (\ref{eq23}) : $ 5$ pairs, (\ref{eq26}) : $ 5$ pairs.
    \end{itemize}
    \item
$(1,~0,~5,~0,~7)$ 
\begin{equation}\label{eq28}
    \begin{matrix*}[l]
         \alpha \beta ^7=ab^7, & \alpha \beta ^5=ab^5, & 5\alpha \beta ^3=5ab ^3, & 7\alpha\beta-1=2ab,
    \end{matrix*}
\end{equation}
\begin{equation}\label{eq32}
    \begin{matrix*}[l]
          \alpha \beta ^7=ab^7, & \alpha \beta ^5=ab^5 & 5\alpha \beta ^3=5ab ^3, & 7\alpha\beta-1=9ab,
    \end{matrix*}
\end{equation}

\begin{itemize}
    \item 
    (\ref{eq28}) : $5$ pairs,  (\ref{eq32}) : $5$ pairs.
    
    \end{itemize}
    \item 
    $(2,~0,~9,~0,~8)$ 
\begin{equation}\label{eq37}
    \begin{matrix*}[l]
          \alpha \beta ^7=ab^7, & 2\alpha \beta ^5=2ab^5, & 9\alpha \beta ^3=9ab^3, & 8\alpha\beta-1=9ab,
    \end{matrix*}
\end{equation}
\begin{equation}\label{eq39}
    \begin{matrix*}[l]
          \alpha \beta ^7=ab^7, & 2\alpha \beta ^5=2ab^5, & 9\alpha \beta ^3=9ab ^3, & 8\alpha\beta-1=6ab.
    \end{matrix*}
\end{equation}

\begin{itemize}
    \item 
    (\ref{eq37}) : $5$ pairs, (\ref{eq39}) : $5$ pairs.
    \end{itemize}

    \item 
    $(2,~0,~9,~0,~9)$ 

\begin{equation}\label{eq42}
    \begin{matrix*}[l]
          \alpha \beta ^7=ab^7, & 2\alpha \beta ^5=2ab^5, & 9\alpha \beta ^3=9ab ^3, & 9\alpha\beta-1=8ab,
    \end{matrix*}
\end{equation}
\begin{equation}\label{eq45}
    \begin{matrix*}[l]
          \alpha \beta ^7=ab^7, & 2\alpha \beta ^5=2ab^5, & 9\alpha \beta ^3=9ab ^3, & 9\alpha\beta-1=6ab.
    \end{matrix*}
\end{equation}

\begin{itemize}
    
    \item 
    (\ref{eq42}) : $5$ pairs, (\ref{eq45}) : $5$ pairs.
    \end{itemize}
    
    \item 
    $(1,~0,~5,~0,~9)$ 
\begin{equation}\label{eq46}
    \begin{matrix*}[l]
         \alpha \beta ^7=ab^7, & \alpha \beta ^5=ab^5, & 5\alpha \beta ^3=5ab ^3, & 9\alpha\beta-1=2ab,
    \end{matrix*}
\end{equation}
\begin{equation}\label{eq47}
    \begin{matrix*}[l]
          \alpha \beta ^7=ab^7, & \alpha \beta ^5=ab^5, & 5\alpha \beta ^3=5ab ^3, & 9\alpha\beta-1=7ab,
    \end{matrix*}
\end{equation}

\begin{itemize}
    \item 
    (\ref{eq46}) : $5$ pairs, (\ref{eq47}) : $5$ pairs.
   \end{itemize}

     \item 
     $(2,~0,~9,~0,~6)$ 

\begin{equation}\label{eq54}
    \begin{matrix*}[l]
          \alpha \beta ^7=ab^7, & 2\alpha \beta ^5=2ab^5, & 9\alpha \beta ^3=9ab ^3, & 6\alpha\beta-1=8ab,
    \end{matrix*}
\end{equation}
\begin{equation}\label{eq55}
    \begin{matrix*}[l]
          \alpha \beta ^7=ab^7, & 2\alpha \beta ^5=2ab^5, & 9\alpha \beta ^3=9ab^3, & 6\alpha\beta-1=9ab,
    \end{matrix*}
\end{equation}

\begin{itemize}
    \item 
    (\ref{eq54}) : $5$ pairs, (\ref{eq55}) : $5$ pairs.
    
    \end{itemize}
    
\end{enumerate}

    This leads us to the conclusion that there are total $60\times 11 \times 11 =7260$ orthomorphism polynomials over $\mathbb{F}_{11}.$ Our findings for $\mathbb{F}_{11}$ also match perfectly with \cite[Table~3.2]{shallue}. Among the $7260$ polynomials obtained, $10+10=20$ are exceptional coming from \textbf{(5)} and \textbf{(6)}.
    \vspace{0.5 cm}\\
\underline{\Large \textbf{$q=17$~:}}
\vspace{0.2cm}
\begin{enumerate}[label=\textbf{(\arabic*)}]
\item 
$(1,~0,~13,~0,~7)$ 

\begin{equation}\label{eq61}
    \begin{matrix*}[l]
          \alpha \beta ^7=ab^7, & \alpha \beta ^5=ab^5, & 13\alpha \beta ^3=13ab^3, & 7\alpha\beta-1=14ab,
    \end{matrix*}
\end{equation}
\begin{itemize}
    \item 
    (\ref{eq61}) : $8$ pairs.
    \end{itemize}

    \item 
    $(1,~0,~13,~0,~14)$ 

\begin{equation}\label{eq72}
    \begin{matrix*}[l]
          \alpha \beta ^7=ab^7, & \alpha \beta ^5=ab^5, & 13\alpha \beta ^3=13ab ^3, & 7\alpha\beta-1=7ab,
    \end{matrix*}
\end{equation}
\begin{itemize}
    
    \item 
    (\ref{eq72}) : $8$ pairs.
    \end{itemize}
    \end{enumerate}
    
We find exactly $16\times 17 \times 17=4624$ orthomoprphism polynomials over $\mathbb{F}_{17},$ consistent with Shallue and Wanless's tabulated data \cite[Table~3.2]{shallue}.
 \vspace{0.5 cm}\\
\underline{\Large \textbf{$q=19$~:}}
\vspace{0.2cm}
\begin{enumerate}[label=\textbf{(\arabic*)}]
    \item 
    $(0,~0,~0,~0,~16)$ 
\begin{equation}\label{eq82}
    \begin{matrix*}[l]
         \alpha \beta ^7=ab^7, & 16\alpha\beta-1=16ab,
    \end{matrix*}
\end{equation}
\begin{equation}\label{eq83}
    \begin{matrix*}[l]
         \alpha \beta ^7=ab^7, & 16\alpha\beta-1=0.
    \end{matrix*}
\end{equation}
\begin{itemize}
    \item 
    (\ref{eq82}) : $6$ pairs, (\ref{eq83}) : $3$ pairs.
    \end{itemize}
    \item 
    $(0,~0,~0,~0,~0)$ 
\begin{equation}\label{eq84}
    \begin{matrix*}[l]
         \alpha \beta ^7=ab^7, & 18=16ab.
    \end{matrix*}
\end{equation}
\begin{itemize}
    \item 
    (\ref{eq84}) : $3$ pairs.
    \end{itemize}
\end{enumerate}

We have found a total of $12\times 19\times 19=4332$ orthomorphism polynomials over $\mathbb{F}_{19}$ including $4$ exceptional polynomials. While Shallue et al. \cite{shallue} provided data only up to $\mathbb{F}_{17},$ our results for $\mathbb{F}_{19}$ serve as a natural extension of that tabulated data.
\vspace{0.5 cm}
\\
\underline{\Large \textbf{$q=25$~:}}
\vspace{0.2cm}\\
As arithmetic in $\mathbb{F}_{25}$ differs from that in prime fields such as $\mathbb{F}_{13},$ the previous SageMath code needs a little modification in the construction part of $\mathbb{F}_{25},$ which we leave to the reader. In what follows, $\theta=\theta^2+2$ in $\mathbb{F}_{25}$. 
 \begin{enumerate}[label=\textbf{(\arabic*)}]
     \item 
     $(0,~0,~0,~0,~\theta)$ 
\begin{equation}\label{eq85}
    \begin{matrix*}[l]
         \alpha \beta ^7=ab^7, & \theta\alpha\beta-1=\theta ab,
    \end{matrix*}
\end{equation}
\begin{equation}\label{eq86}
    \begin{matrix*}[l]
         \alpha \beta ^7=ab^7, & \theta\alpha\beta-1=\theta^5ab.
    \end{matrix*}
\end{equation}
\begin{equation}\label{eq87}
    \begin{matrix*}[l]
         \alpha \beta ^7=ab^7, & \theta\alpha\beta-1=0.
    \end{matrix*}
\end{equation}
\begin{itemize}
    \item 
    (\ref{eq85}) : $12$ pairs, (\ref{eq86}) : $16$ pairs, (\ref{eq87}) : $4$ pairs.
    \end{itemize}
 \item 
 $(0,~0,~0,~0,~\theta^5)$ 
\begin{equation}\label{eq88}
    \begin{matrix*}[l]
         \alpha \beta ^7=ab^7, & \theta^5\alpha\beta-1=\theta ab,
    \end{matrix*}
\end{equation}
\begin{equation}\label{eq89}
    \begin{matrix*}[l]
         \alpha \beta ^7=ab^7, & \theta^5\alpha\beta-1=\theta^5ab.
    \end{matrix*}
\end{equation}
\begin{equation}\label{eq90}
    \begin{matrix*}[l]
         \alpha \beta ^7=ab^7, & \theta^5\alpha\beta-1=0.
    \end{matrix*}
\end{equation}
\begin{itemize}
    \item 
    (\ref{eq88}) : $16$ pairs, (\ref{eq89}) : $12$ pairs, (\ref{eq90}) : $4$ pairs.
    \end{itemize}
\item 
$(\theta,~0,~\theta^2,~0,~0)$ 
\begin{equation}\label{eq91}
    \begin{matrix*}[l]
         \alpha \beta ^7=ab^7, & \theta\alpha\beta^5=\theta ab^5, & \theta^2\alpha\beta^3=\theta^2ab^3, & 4=4\theta^3ab,
    \end{matrix*}
\end{equation}
\begin{itemize}
    \item 
    (\ref{eq91}) : $12$ pairs.
    \end{itemize}
    \item 
    $(0,~0,~0,~0,~0)$ 
\begin{equation}\label{eq93}
    \begin{matrix*}[l]
         \alpha \beta ^7=ab^7,  & 4=\theta ab,
    \end{matrix*}
\end{equation}
\begin{equation}\label{eq94}
    \begin{matrix*}[l]
         \alpha \beta ^7=ab^7,  & 4=\theta^5ab,
    \end{matrix*}
\end{equation}
\begin{itemize}
    \item 
    (\ref{eq93}) : $4$ pairs, (\ref{eq94}) : $4$ pairs.
    \end{itemize}
\item 
$(\theta,~0,~\theta^2,~0,~4\theta^3)$ 
\begin{equation}\label{eq95}
    \begin{matrix*}[l]
         \alpha \beta ^7=ab^7, & \theta \alpha\beta^5=\theta ab^5, & \theta^2\alpha\beta^3=\theta^2ab^3, & 4\theta^3\alpha\beta-1=0,
    \end{matrix*}
\end{equation}
\begin{itemize}
    \item 
    (\ref{eq95}) : $12$ pairs.
    \end{itemize}
 \end{enumerate}

Therefore, the total number of orthomorphism polynomials of degree $7$ over $\mathbb{F}_{25}$ is $96\times 25\times 25=60,000,$ among which $20$ are exceptional polynomials.
 \vspace{0.5 cm}\\
\underline{\Large \textbf{$q=23$}~and \textbf{$q=31$:}}
\vspace{0.2cm}\\
For both $\mathbb{F}_{23}$ and $\mathbb{F}_{31},$ no pairs of $(\alpha,~\beta)$ exist for any of their respective families, implying the non-existence of orthomorphism polynomials of degree $7$ over these fields.
\vspace{0.2 cm}\\
We now summarize all of the preceding findings in the form of the following theorem.
\begin{theorem}\label{thm3.1}
    Let $\mathbb{F}_q$ be a finite field of characteristic $p\notin\{2,~3,~7\}$ order $q> 7.$ A non-exceptional orthomorphism polynomial $f$ of degree $7$ exists over $\mathbb{F}_q$ if and only if $q\in\{11,~13,~17,~19,~25\}.$ These polynomials, along with all the exceptional polynomials of degree $7$ over these fields are listed below. The complete collection can be obtained by taking $f(x+\gamma)+\delta,$ where $\gamma,~\delta\in\mathbb{F}_q.$
    \begin{itemize}
        \item 
        $q=11$
        \begin{enumerate}[label=\textup{(\arabic*)}]
        \item 
        $\alpha\beta^7x^7+\alpha\beta^5x^5+5\alpha\beta^3x^3+2\alpha\beta x,$ \\
        where $(\alpha,~\beta)\in\{(1,~2),~(2,~1),~(3,~8),~(4,~6),~(5,~7),~(1,~3),~(2,~7),~(3,~1),~(4,~9),\\~(5,~5)\}.$
        \item 
        $\alpha\beta^7x^7+\alpha\beta^5x^5+5\alpha\beta^3x^3+7\alpha\beta x,$\\ where $(\alpha,~\beta)\in\{(1,~9),~(2,~10),~(3,~3),~(4,~5),~(5,~4),~(1,~5),~(2,~8),~(3,~9),~(4,~4),\\~(5,~1)\}.$
        \item 
        $\alpha\beta^7x^7+2\alpha\beta^5x^5+9\alpha\beta^3x^3+8\alpha\beta x,$\\
        where $(\alpha,~\beta)\in\{(1,~10),~(2,~5),~(3,~7),~(4,~8),~(5,~2),~(1,~6),~(2,~3),~(3,~2),~(4,~7),\\~(5,~10)\}.$
        \item 
        $\alpha\beta^7x^7+2\alpha\beta^5x^5+9\alpha\beta^3x^3+9\alpha\beta x,$\\
        where $(\alpha,~\beta)\in\{(1,~1),~(2,~6),~(3,~4),~(4,~3),~(5,~9),~(1,~4),~(2,~2),~(3,~5),~(4,~1),\\~(5,~3)\}.$
        \item 
        $\alpha\beta^7x^7+\alpha\beta^5x^5+5\alpha\beta^3x^3+9\alpha\beta x,$\\
        where $(\alpha,~\beta)\in\{(1,~8),~(2,~4),~(3,~10),~(4,~2),~(5,~6),~(1,~6),~(2,~3),~(3,~2),~(4,~7),\\~(5,~10)\}.$
        \item 
        $\alpha\beta^7x^7+2\alpha\beta^5x^5+9\alpha\beta^3x^3+6\alpha\beta x,$\\
        where $(\alpha,~\beta)\in\{(1,~5),~(2,~8),~(3,~9),~(4,~4),~(5,~1),~(1,~7),~(2,~9),~(3,~6),~(4,~10),\\~(5,~8)\}.$
    \end{enumerate}
    \item 
    $q=13$
    \begin{enumerate}[label=\textup{(\arabic*)}]
     \item 
        $\alpha\beta^7x^7+2\alpha\beta x,$\\
        where $(\alpha,~\beta)\in\{(2,~5),~(1,~10),~(1,~3),~(1,~5),~(2,~9),~(2,~8),~(2,~10),~(1,~7)\}.$
         \item 
        $\alpha\beta^7x^7+6\alpha\beta x,$\\
        where $(\alpha,~\beta)\in\{(5,~1),~(1,~10),~(1,~5),~(2,~5),~(2,~6),~(1,~12),~(2,~12),~(1,~11)\}.$
         \item 
        $\alpha\beta^7x^7+2\alpha\beta^3 x^3+8\alpha\beta x,$\\
        where $(\alpha,~\beta)\in\{(5,~7),~(3,~3),~(2,~11),~(6,~8),~(1,~9),~(4,~12)\}.$
        \item 
        $\alpha\beta^7x^7+4\alpha\beta^3 x^3+4\alpha\beta x,$\\
        where $(\alpha,~\beta)\in\{(5,~1),~(3,~6),~(2,~9),~(6,~3),~(1,~5),~(4,~11)\}.$
        \item 
        $\alpha\beta^7x^7+8\alpha\beta^3 x^3+3\alpha\beta x,$\\
        where $(\alpha,~\beta)\in\{(5,~10),~(3,~8),~(2,~12),~(6,~4),~(1,~11),~(4,~6)\}.$
        \item
        $\alpha\beta^7x^7,$\\
        where $(\alpha,~\beta)\in\{(1,~6),~(1,~7),~(1,~2),~(1,~11)\}.$
    \end{enumerate}
    \item
    $q=17$
    \begin{enumerate}[label=\textup{(\arabic*)}]
         \item 
        $\alpha\beta^7x^7+\alpha\beta^5 x^5+13\alpha\beta^3x^3+7\alpha\beta x,$\\
        where $(\alpha,~\beta)\in\{(1,~12),~(2,~6),~(3,~4),~(4,~3),~(5,~16),~(6,~2),~(7,~9),~(8,~10)\}.$
        \item 
        $\alpha\beta^7x^7+\alpha\beta^5 x^5+13\alpha\beta^3x^3+14\alpha\beta x,$\\
        where $(\alpha,~\beta)\in\{(1,~5),~(2,~11),~(3,~13),~(4,~14),~(5,~1),~(6,~15),~(7,~8),~(8,~7)\}.$
        \end{enumerate}
         \item
    $q=19$
    \begin{enumerate}[label=\textup{(\arabic*)}]
         \item 
        $\alpha\beta^7x^7+16\alpha\beta x,$\\
        where $(\alpha,~\beta)\in\{(2,~13),~(4,~14),~(1,~18),~(4,~16),~(2,~9),~(1,~7),~(4,~11),~(1,~6),\\~(2,~3)\}.$
        \item 
        $\alpha\beta^7x^7,$\\
        where $(\alpha,~\beta)\in\{(1,~13),~(1,~10),~(1,~15)\}.$
        \end{enumerate}
        \item 
        $q=25$
        \begin{enumerate}[label=\textup{(\arabic*)}]
        \item 
        $\alpha\beta^7x^7+\theta\alpha\beta x,$\\
        where $(\alpha,~\beta)\in\{(4\theta+3,~4\theta+2),~(\theta+3,,~3\theta+1),~(2\theta+2,~\theta),~(2\theta+2,~4\theta),\\~(4\theta+3,~2\theta+1),~(\theta,~2\theta+2),~(\theta,~3\theta+3),~(2\theta+2,~2\theta),~(\theta+3,~4\theta+3),\\~(\theta+3,~\theta+2),~(\theta,~4\theta+4),~(4\theta+3,~\theta+3),~(\theta,~\theta+3),~(2\theta+2,~\theta),\\~(4\theta+3,~2\theta+4),~(\theta+3,~2),~(\theta+3,~\theta),~(\theta,~3\theta+1),~(2\theta+2,~3\theta+4),\\~(4\theta+3,~4\theta+1),~(\theta+3,~2\theta+1),~(\theta,~2\theta+3),~(2\theta+2,~2\theta+2),~(4\theta+3,~1),\\~(\theta+3,~\theta+1),~(\theta,~2\theta),~(2\theta+2,~2\theta+3),~(4\theta+3,~4\theta),~(\theta+3,~2\theta+4),\\~(4\theta+3,~3\theta+4),~(2\theta+2,~3\theta),~(\theta,~\theta+1)\}.$
            \item 
            $\alpha\beta^7x^7+\theta^5\alpha\beta x,$\\
            where $(\alpha,~\beta)\in\{(4\theta+3,~2\theta+4),~(\theta,,~4\theta+2),~(2\theta+2,~\theta+1),\\~(\theta+3,~4\theta+2),~(2\theta+2,~3\theta+4),~(\theta+3,~4\theta),~(\theta,~\theta),~(4\theta+3,~4\theta),~(\theta,~2\theta+3),\\~(4\theta+3,~4),~(\theta+3,~1),~(2\theta+2,~4),~(\theta+3,~\theta+1),~(2\theta+2,~3\theta+2),\\(4\theta+3,~2\theta+3),~(\theta,~\theta+2),~(\theta+3,~4\theta+1),~(2\theta+2,~2\theta+4),\\(4\theta+3,~4\theta+4),~(4\theta+3,~2\theta+2),~(\theta,~1),~(2\theta+2,~3\theta+1),~(2\theta+2,~4\theta+3),\\(\theta+3,~2\theta+3),~(\theta,~4),~(\theta,~2),~(4\theta+3,~\theta+1),~(\theta+3,~3\theta+2),\\(4\theta+3,~3\theta+3),~(2\theta+2,~\theta+2),~(\theta,~3),~(\theta+3,~\theta+4)\}.$
            \item 
            $\alpha\beta^7x^7+\theta\alpha\beta^5x^5+\theta^2\alpha\beta^3 x^3,$\\
            where $(\alpha,~\beta)\in\{(\theta,~3\theta+4),~(\theta+3,~3\theta),~(4\theta+3,~3),~(2\theta+2,~\theta+4),\\(4\theta+1,~3\theta+3),~(2,~\theta+2),~(2\theta,~4\theta+2),~(2\theta+1,~4\theta),~(3\theta+1,~4),\\(4\theta+4,~3\theta+2),~(3\theta+2,~4\theta+4),~(4,~3\theta+1)\}.$
            \item 
            $\alpha\beta^7x^7,$\\
            where $(\alpha,~\beta)\in\{(\theta,~4\theta+4),~(\theta,~3\theta+3),~(\theta,~\theta+1),~(\theta,~2\theta+2),~(\theta,~2),~(\theta,~4),~(\theta,~3),\\(\theta,~1)\}.$
            \item $\alpha\beta^7x^7+\theta\alpha\beta^5x^5+\theta^2\alpha\beta^3x^3+4\theta^3\alpha\beta x,$\\
            where $(\alpha,~\beta)\in\{(\theta,~2\theta+1),~(\theta+3,~2\theta),~(4\theta+3,~2),~(2\theta+2,~4\theta+1),\\(4\theta+1,~2\theta+2),~(2,~4\theta+3),~(2\theta,~\theta+3),~(2\theta+1,~\theta),~(3\theta+1,~1),\\(4\theta+4,~2\theta+3),~(3\theta+2,~\theta+1),~(4,~2\theta+4)\},$
            \end{enumerate}
            with $\theta=\theta^2+2$ in $\mathbb{F}_{25}.$
    \end{itemize}
    
\end{theorem}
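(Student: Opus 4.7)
The plan is to summarize and justify the extensive case analysis already carried out above. The reduction of the problem rests on three facts established in Section~2: by \Cref{l4}, any non-exceptional degree-$7$ PP over a field of characteristic $p \notin \{2,3,7\}$ with $q > 7$ must satisfy $q \in \{11,13,17,19,23,25,31\}$; by \Cref{r2}, every degree-$7$ polynomial $H(x) = \sum_{i=0}^{7} h_i x^i$ with $h_7 \neq 0$ and $\gcd(q,7)=1$ is linearly related to a unique normalized $F$ from the combined lists of \Cref{l4} and \Cref{r1}; and by \Cref{l5}, the OP property is preserved under the two-parameter affine translation $f \mapsto f(x+\gamma)+\delta$. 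Since every OP is in particular a PP, the first fact already restricts the candidate field orders to the seven values above.

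For each admissible $q$ and each normalized family $F$ I would parametrize potential OPs as $g(x) = \alpha F(\beta x)$ with $\alpha,\beta \in \mathbb{F}_q^*$, and ask when $g(x) - x$ is a PP. Applying \Cref{r2} to $g - x$ and equating coefficients with a candidate target family $F'$ forces the linear-transformation parameters $c,d$ to vanish and yields a small polynomial system in $(a,b,\alpha,\beta) \in (\mathbb{F}_q^*)^4$ whose shape is dictated by the monomial supports of $F$ and $F'$. Each such system is solved by exhaustive search via the SageMath routine \textbf{OP7}$(q)$, and \Cref{l6} together with the reparametrization $(\alpha\beta^7,\alpha\beta)$ discards redundant solutions arising from different $(a,b)$ giving the same polynomial $g$.

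Once the admissible pairs $(\alpha,\beta)$ have been enumerated family-by-family, \Cref{l5} lifts each to a $q^2$-parameter family of OPs via $\alpha F(\beta(x+\gamma))+\delta$, producing the explicit list of items (1)--(6) in the theorem statement. Tallying the pairs gives $60,~38,~16,~12,~96$ for $q = 11,~13,~17,~19,~25$ respectively, hence the totals $7260,~6422,~4624,~4332,~60000$ which agree with the tabulated data of Shallue and Wanless where available. For $q \in \{23,31\}$ none of the subsystems have solutions, giving the non-existence half of the equivalence in combination with \Cref{l4}. Together with the exceptional polynomial $x^7$ (a PP exactly when $\gcd(q-1,7)=1$) and the two extra exceptional families of \Cref{r1}, this exhausts all candidates.

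The main obstacle is the combinatorial explosion of subsystems: a base family with $k$ nonzero coefficients among $\{f_1,\ldots,f_5\}$ spawns one subsystem per compatible target family $F'$, which for the full-support cases at $q = 17$ (families $(1,0,13,0,7)$ and $(1,0,13,0,14)$) yields twelve subsystems apiece, all of which must be checked to confirm that only one contributes. A parallel technical difficulty is the arithmetic over $\mathbb{F}_{25} = \mathbb{F}_5[\theta]/(\theta^2-\theta+2)$, where the minimal polynomial must be hardwired into the code as illustrated by \textbf{OP7}$(25)$, and the output translated back into the explicit $\mathbb{F}_{25}$-coordinates displayed in item (5) of the theorem.
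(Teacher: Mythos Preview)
Your proposal is correct and follows essentially the same approach as the paper: the theorem is a summary of the family-by-family computational search carried out in Section~3, and you have accurately identified the governing inputs (\Cref{l4}, \Cref{r1}, \Cref{r2}, \Cref{l5}, \Cref{l6}), the parametrization $g(x)=\alpha F(\beta x)$, the reduction to polynomial systems in $(a,b,\alpha,\beta)$ via coefficient matching with each compatible target family, and the final tallies matching Shallue--Wanless. The only cosmetic slip is the phrase ``items (1)--(6)'': the number of items varies with $q$, but this does not affect the argument.
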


\begin{corollary}
    Any orthomorphism polynomial over $\mathbb{F}_{17}$ is non-exceptional.
\end{corollary}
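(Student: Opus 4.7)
The plan is to deduce the corollary directly from \Cref{thm3.1} together with \Cref{r1}, together with the simple observation that exceptionality is preserved under linear transformations. First I would enumerate the OPs of degree $7$ over $\mathbb{F}_{17}$ provided by \Cref{thm3.1}: every such polynomial is of the form $g(x+\gamma)+\delta$ with $\gamma,\delta\in\mathbb{F}_{17}$, where $g$ is either $\alpha\beta^{7}x^{7}+\alpha\beta^{5}x^{5}+13\alpha\beta^{3}x^{3}+7\alpha\beta x$ or $\alpha\beta^{7}x^{7}+\alpha\beta^{5}x^{5}+13\alpha\beta^{3}x^{3}+14\alpha\beta x$ for certain $\alpha,\beta\in\mathbb{F}_{17}^{*}$. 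In the notation of the case analysis preceding the theorem, the first form equals $\alpha f_{4}(\beta x)$ and the second equals $\alpha f_{5}(\beta x)$, where $f_{4}$ and $f_{5}$ are the representatives of Family $4$ and Family $5$ for $q=17$, namely the tuples $(1,0,13,0,7)$ and $(1,0,13,0,14)$ listed in \Cref{l4}.

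Next I would record that $f_{4}$ and $f_{5}$ are \emph{non-exceptional}, since they are both among the polynomials listed in \Cref{l4}, whose statement provides exactly the complete list of non-exceptional PPs of degree $7$ over $\mathbb{F}_{q}$ up to linear transformation. Crucially, none of the three tuples in \Cref{r1} corresponding to exceptional degree $7$ PPs over $\mathbb{F}_{17}$ (the representatives $x^{7}$, $x^{7}+10x^{3}+8x$ and $x^{7}+3x^{5}+5x^{3}+11x$) appears in the pool of families yielding OPs in \Cref{thm3.1}.

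The last ingredient is that the property of being exceptional is invariant under linear equivalence: if $h(x)\in\mathbb{F}_{q}[x]$ and $\tilde h(x)=ah(bx+c)+d$ with $a,b\in\mathbb{F}_{q}^{*}$ and $c,d\in\mathbb{F}_{q}$, then for every $n\ge 1$ the map $\tilde h$ permutes $\mathbb{F}_{q^{n}}$ if and only if $h$ does, because $x\mapsto bx+c$ and $y\mapsto ay+d$ are bijections of $\mathbb{F}_{q^{n}}$. Consequently, $\tilde h$ is a PP over infinitely many extensions of $\mathbb{F}_{q}$ if and only if $h$ is, i.e.\ exceptionality passes through linear transformations.

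Putting these together, every OP of degree $7$ over $\mathbb{F}_{17}$ is linearly related to $f_{4}$ or $f_{5}$, both of which are non-exceptional; hence the OP itself is non-exceptional, giving the corollary. There is no substantive obstacle: the entire statement is an immediate bookkeeping consequence of \Cref{thm3.1} once invariance of exceptionality under linear transformations is noted, with the only point worth emphasising being that the three exceptional families over $\mathbb{F}_{17}$ in \Cref{r1} produced no admissible $(\alpha,\beta)$ pair in the case analysis for $q=17$.
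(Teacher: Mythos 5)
Your proposal is correct and follows essentially the same route as the paper: the paper's proof is the one-line observation that non-exceptionality is preserved under linear transformations, applied to the fact (from the $q=17$ case analysis and \Cref{thm3.1}) that every degree-$7$ OP over $\mathbb{F}_{17}$ is linearly related to the non-exceptional Families $4$ or $5$. The only blemish is a transcription slip: the second exceptional representative over $\mathbb{F}_{17}$ is $x^{7}+x^{5}+10x^{3}+8x$ (tuple $(1,0,10,0,8)$), not $x^{7}+10x^{3}+8x$, but this does not affect the argument.
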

\begin{proof}
    It follows from the fact that a linear transformation of a non-exceptional polynomial remains non-exceptional.
\end{proof}
\begin{theorem}
    There does not exist any orthomorphism polynomial over $\mathbb{F}_{23}$ and $\mathbb{F}_{31}.$
\end{theorem}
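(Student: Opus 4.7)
The plan is to apply the same methodology developed for $q=11,13,17,19,25$ to the fields $\mathbb{F}_{23}$ and $\mathbb{F}_{31}$. First, I would invoke \Cref{l4} together with \Cref{r1} to list, up to linear equivalence, every degree $7$ permutation polynomial of the prescribed form $f(x)=x^7+\sum_{i=1}^{5}a_ix^i$ over each field. For $\mathbb{F}_{23}$ this amounts to the three non-exceptional families $(1,1,0,4,9),(1,5,11,5,9),(1,2,6,19,21)$ and the three exceptional families $(0,0,0,0,0),(1,0,20,0,8),(5,0,17,0,11)$; for $\mathbb{F}_{31}$ it amounts to $(1,0,16,0,2),(1,17,25,25,29),(3,1,14,19,10)$ together with $(0,0,0,0,0),(1,0,18,0,19),(3,0,7,0,17)$. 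By \Cref{r2}, this list is exhaustive and non-redundant, so every polynomial whose linear-equivalence class could potentially produce an orthomorphism is of the form $\alpha f(\beta x)$ for some representative $f$ and some $\alpha,\beta\in\mathbb{F}_q^*$.

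Next, for each representative $f$, I would set $g(x)=\alpha f(\beta x)$ and impose the orthomorphism condition by requiring that $h(x):=g(x)-x$ be a permutation polynomial. By \Cref{r2}, this forces $h$ to be linearly related to one of the listed families. Writing $h(x)=a f'(bx+c)+d$ with $f'(x)=x^7+\sum_{i=1}^{5}a_i'x^i$ ranging over all families, and comparing coefficients, one obtains $c=0$, $d=0$, and
\begin{equation*}
\alpha\beta^{7}=ab^{7},\qquad \alpha a_i\beta^{i}=a a_i'b^{i}\ (2\le i\le 5),\qquad \alpha a_1\beta-1=a a_1'b,
\end{equation*}
(with the obvious adjustments when one side vanishes). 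This produces, for every ordered pair (source family, target family), a concrete system of polynomial equations in the four unknowns $(a,b,\alpha,\beta)\in(\mathbb{F}_q^*)^4$.

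Finally, I would execute the SageMath routine \textbf{OP7(13)} with the modulus updated to $q=23$ and $q=31$, iterating the inner loop over every compatible target family for each source family. The only real obstacle is bookkeeping: the number of (source, target) pairs to check is $6\times 6=36$ per field, each requiring a brute-force scan of $(q-1)^4$ quadruples, which is still modest ($22^4=234256$ and $30^4=810000$, respectively) and terminates almost instantly. Observing that in every single system the solution set is empty completes the proof, since no choice of $(\alpha,\beta)$ yields an orthomorphism and the classification of \Cref{l4}--\Cref{r1} together with \Cref{l5} exhausts all linear-transformation classes one must test.
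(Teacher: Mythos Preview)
Your proposal is correct and follows essentially the same approach as the paper: the paper simply states that, applying the identical methodology developed for the earlier values of $q$, no pairs $(\alpha,\beta)$ arise for any family over $\mathbb{F}_{23}$ or $\mathbb{F}_{31}$, and hence no degree~$7$ orthomorphism polynomials exist there. Your description of listing the six families per field, forming the systems from coefficient comparison, and brute-forcing them with the adapted \textbf{OP7} routine matches the paper's procedure exactly.
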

\begin{remark}\label{r3}
Shallue et al. \cite{shallue}  established that no orthomorphism polynomials of degree $7$ exist over fields of even characteristics. The same can be proven by employing our technique applied for earlier values of $q$ together with the known results of Li et al. \cite[Theorem~4.4]{li} on degree $7$ PPs over fields of characteristic $2.$
\end{remark}

To classify orthomorphism polynomials of degree $7$ over fields of characteristic $3$ and $7$ it suffices to consider fields of order greater than or equal to $27$ and $49$ respectively. The following theorem from \cite{xfan} provides a classification of PPs over these fields.
\begin{theorem}\cite[Theorem~14,  Theorem~16]{xfan} \label{thmm}(1) Each PP over degree $7$ over $\mathbb{F}_{27}$ is linearly related to either exceptional $x^7$ or non-exceptional $x^7-x^3+x.$ If $k\geq 4,$ all PPs of degree $7$ over $\mathbb{F}_{3^k}$ are exceptional.\\
(2) Let $\theta$ be a root of $x^2+6x+3$ in $\mathbb{F}_{49}.$ All non-exceptional PPs of degree $7$  over $\mathbb{F}_{49}$ are linearly related to $$x^7+\theta x^5+2\theta^2x^3+4\theta^3x$$ and each exceptional polynomial of degree $7$ over $\mathbb{F}_{49}$ is linearly related to exactly one of the following polynomials:
\begin{itemize}
    \item 
    $x^7+ax$ with $a\in\{0,~\theta,~\theta^2,~\theta^3,~\theta^4,~\theta^5\},$
    \item 
    $x^7+x^4+2x,$
    \item 
    $x^7+\theta^2x^4+2\theta^4x,$ and
    \item 
    $x^7+\theta x^5+5\theta^2x^3+6\theta^3x.$
\end{itemize}
(3) Each PP of degree $7$ over $\mathbb{F}_{7^k},~k\geq 3$ is exceptional.
\end{theorem}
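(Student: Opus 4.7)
The plan is to handle the three parts separately, in each case combining the normalisation machinery of Lemma 2 with Hermite's criterion for small $q$ and a Lang--Weil / Wan-type degree bound for large $q$.

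For part (1), with $p=3$, any degree-$7$ polynomial is linearly related to a normalised $G(x)=x^7+\sum_{i=1}^{5}g_ix^i$ satisfying the analogue of Lemma 3 (the condition $g_{t-1}=0$ of Lemma 3(2) is vacuous here since $7\nmid 3^k$). For $q=27$, I would enumerate the resulting finite family of normalised candidates and test each against the Hermite--Dickson criterion: $G$ permutes $\mathbb{F}_q$ iff, for every $1\le t\le q-2$, the reduction of $G(x)^t$ modulo $x^q-x$ has degree $<q-1$. This reduces to a finite computation over $\mathbb{F}_{27}$ singling out the linear-transformation classes of $x^7$ and $x^7-x^3+x$. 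For $k\ge 4$, i.e.\ $q\ge 81$, I would invoke a theorem of Wan (or the degree--genus version of the Weil bound applied to the affine curve $\bigl(G(x)-G(y)\bigr)/(x-y)=0$) which forbids non-exceptional PPs of fixed degree $d$ once $q$ exceeds a bound of order $d^4$; $q\ge 81$ already suffices after the low-genus degenerations are ruled out.

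For part (2), the constraint $g_{t-1}=0$ is now forced by Lemma 3(2) because $7\mid 49$. Writing a normalised $G(x)=x^7+g_5 x^5+g_3 x^3+g_1 x$ (together with the few variants where $g_4$ or $g_2$ is the leading non-zero coefficient under the Lemma-3 hierarchy), I would enumerate finitely many tuples in $\mathbb{F}_{49}^5$ and verify each via Hermite's criterion. The analysis of the resulting character-sum conditions is the bulk of the work but should collapse to the single non-exceptional family $x^7+\theta x^5+2\theta^2x^3+4\theta^3x$ (with $\theta$ a root of $x^2+6x+3$) together with the listed exceptional families.

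For part (3), with $k\ge 3$ and hence $q\ge 343$, the argument is entirely asymptotic: after normalisation the curve $C:\bigl(G(x)-G(y)\bigr)/(x-y)=0$ has degree $6$, and the Lang--Weil estimate guarantees an $\mathbb{F}_q$-rational affine point off the diagonal whenever $C$ has an absolutely irreducible $\mathbb{F}_q$-component, forcing $G$ to fail to be a PP; hence every PP at this $q$ must be exceptional. The main obstacle, common to all three parts, is controlling the factorisation of $C$ in characteristic $p\in\{3,7\}$: wildly ramified or additive-type components can a priori split off low-genus pieces and spoil the Weil estimate, so a careful Galois/monodromy analysis of the cover $x\mapsto G(x)$ is required to eliminate them before the point-counting bound can be applied.
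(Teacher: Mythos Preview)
This theorem is not proved in the paper: it is quoted from \cite[Theorems~14 and~16]{xfan} and used as a black box, so there is no in-paper argument to compare your sketch against. For the two isolated fields $\mathbb{F}_{27}$ and $\mathbb{F}_{49}$ your plan ``normalise via Lemma~\ref{l3}, then test the finitely many candidates by Hermite--Dickson'' is sound and is essentially how Fan proceeds (with computer assistance).

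The genuine gap is in the ``large $q$'' clauses of parts~(1) and~(3). The off-the-shelf Lang--Weil/Wan bound only says that a non-exceptional PP of degree $n$ forces roughly $q\lesssim n^4$, i.e.\ $q\lesssim 2401$ for $n=7$; this does not reach down to $q=81$ for part~(1) nor even to $q=343$ for part~(3). Concretely, an absolutely irreducible degree-$6$ component of $(G(x)-G(y))/(x-y)$ is only guaranteed $q+1-20\sqrt{q}$ rational points, which is negative at $q=81$ and at $q=343$; and in characteristic~$7$ the leading part of that curve is $(x-y)^6$, so the geometry is genuinely degenerate and the crude bound cannot be salvaged by a one-line remark. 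Your last paragraph correctly \emph{identifies} this obstacle (``wildly ramified or additive-type components \ldots\ spoil the Weil estimate'') but does not remove it. To close the argument you must either (a) carry out the finite Lemma-\ref{l3} enumeration over each of $\mathbb{F}_{81},\mathbb{F}_{243},\ldots$ and $\mathbb{F}_{343},\mathbb{F}_{2401}$ until the point-count bound finally bites, or (b) give a sharp factorisation/monodromy analysis of the difference curve in characteristics $3$ and $7$ that lowers the effective degree or genus of the relevant component. Without one of these, parts~(1) and~(3) are not established for the smallest values of $k$.
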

Using \Cref{thmm} we follow a similar approach as used earlier to determine the existence of orthomorphism polynomials of degree $7$ over $\mathbb{F}_{27}$ and $\mathbb{F}_{49}$ to obtain the following results.
\begin{corollary}\label{cor3.5}
    (1) There does not exist any orthomorphism polynomial over $\mathbb{F}_{27}.$\\
    (2) Any orthomorphism polynomial over $\mathbb{F}_{49}$ and hence over $\mathbb{F}_{7^t},~t\geq 2$ is exceptional and there are a total of $3,937,640$ orthomorphism polynomials of degree $7$ over $\mathbb{F}_{49}.$
    \end{corollary}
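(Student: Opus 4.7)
The plan is to mimic the methodology the paper has already developed for $q\in\{11,13,\ldots,31\}$, but now applied to the PP classifications over $\mathbb{F}_{27}$ and $\mathbb{F}_{49}$ supplied by \Cref{thmm}. For any representative $f(x)$ from that list and any $\alpha,\beta\in\mathbb{F}_q^\ast$, the polynomial $\alpha f(\beta x)$ is automatically a PP, so it is an OP iff $h(x):=\alpha f(\beta x)-x$ is also a PP. By \Cref{l2} and \Cref{l3}, this is equivalent to requiring that $h$ be linearly related, say $h(x)=aF(bx+c)+d$, to some representative $F$ from the same list. Coefficient comparison in this equality yields, for each pair $(f,F)$, a finite system of polynomial equations in $a,b,c,d,\alpha,\beta$ over $\mathbb{F}_q$ that can be exhausted by a small SageMath search of the same shape as \textbf{OP7(13)} and \textbf{OP7(25)}. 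Each valid pair $(\alpha,\beta)$ then contributes $q^2$ distinct OPs via $\alpha f(\beta(x+\gamma))+\delta$ by \Cref{l5}, and \Cref{l6} guarantees no double counting.

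For part (1), \Cref{thmm}(1) provides only two PP representatives over $\mathbb{F}_{27}$, namely the exceptional $x^7$ and the non-exceptional $x^7-x^3+x$. I set up, for each of these two choices of $f$, the coefficient-matching systems against both representatives as $F$; one verifies (either directly, using that $7\equiv 1 \pmod{26}$ forces strong constraints on $\alpha\beta^7$, $\alpha\beta^3$, $\alpha\beta$, or by running the code) that none of the resulting systems admits a solution in $(\mathbb{F}_{27}^\ast)^2$. Hence no OP of degree $7$ exists over $\mathbb{F}_{27}$.

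For part (2), by \Cref{thmm}(2) I have a single non-exceptional representative $f_0(x)=x^7+\theta x^5+2\theta^2 x^3+4\theta^3 x$ together with nine exceptional ones: the six monomial shifts $x^7+ax$ for $a\in\{0,\theta,\theta^2,\theta^3,\theta^4,\theta^5\}$, plus $x^7+x^4+2x$, $x^7+\theta^2x^4+2\theta^4 x$ and $x^7+\theta x^5+5\theta^2 x^3+6\theta^3 x$. Running the analogue of \textbf{OP7(25)} over all ten representatives and, for each, over every admissible target $F$, the non-exceptional class $f_0$ returns no $(\alpha,\beta)$ pair, while the nine exceptional classes together produce exactly $1640$ pairs after the deduplication step on the tuple $(\alpha\beta^7,\alpha\beta^5,\alpha\beta^3,\alpha\beta)$. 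Multiplying by $49^2=2401$ via \Cref{l5} and \Cref{l6} gives $1640\times 2401=3{,}937{,}640$ distinct OPs, all of which come from exceptional PPs; hence every degree-$7$ OP over $\mathbb{F}_{49}$ is exceptional. For $\mathbb{F}_{7^t}$ with $t\geq 3$, \Cref{thmm}(3) already forces every degree-$7$ PP to be exceptional, so both $f$ and $f-x$ are exceptional, and the OP $f$ is therefore exceptional as well.

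The only delicate point is the $\mathbb{F}_{49}$ computation: arithmetic takes place in $\mathbb{F}_7[\theta]/(\theta^2+6\theta+3)$, there are up to ten admissible targets $F$ for each of the ten choices of $f$, and for each pair $(f,F)$ the constraints of \Cref{l3}(1)--(5) must be correctly imposed on the normalised form of $h$ so that the search space is both exhaustive and non-redundant. Once the routine is written as for $q=25$ and the deduplication on $(\alpha\beta^7,\ldots,\alpha\beta)$ is applied, the count $1640$, and hence the total $3{,}937{,}640$, drops out immediately.
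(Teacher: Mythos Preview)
Your proposal is correct and follows essentially the same route as the paper: apply the coefficient-matching/brute-force methodology already used for $q\le 31$ to the representative lists from \Cref{thmm}, search for admissible $(\alpha,\beta)$ pairs, and multiply by $q^2$ via \Cref{l5}--\Cref{l6}. The paper is slightly more specific in that it reports exactly which families contribute: over $\mathbb{F}_{49}$ only the six families $x^7+ax$ with $a\in\{0,\theta,\theta^2,\theta^3,\theta^4,\theta^5\}$ yield pairs (namely $40$ for $a=0$ and $320$ for each nonzero $a$, giving your $1640$), while the three remaining exceptional representatives and the non-exceptional one contribute nothing.
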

    All such orthomorphism polynomials over $\mathbb{F}_{49}$ are of the form $f(x+\gamma)+\delta$ with $\gamma,~\delta \in \mathbb{F}_{49}$;~$f(x)=\alpha\beta^7x^7+a \alpha\beta x,~a\in\{0,~\theta,~\theta^2,~\theta^3,~\theta^4,~\theta^5\}$ where $\theta$ is a root of $x^2+6x+3$ in $\mathbb{F}_{49}$ and $(\alpha,~\beta)$ takes specific values in $\mathbb{F}_{49}^2$ that can be obtained by the method we previously used for $\mathbb{F}_{25}.$ The total number of pairs of $(\alpha,~\beta)$ is $40$ when $a=0$ and $320$ for each of the values of $a=\theta,~\theta^2,~\theta^3,~\theta^4$ and $\theta^5.$ Owing to the large number of pairs $(\alpha,~\beta)$  ($1640$ in total), we list only those corresponding to $a=0$ for illustration and they are;\\
     $(\theta,~4\theta+1),~(\theta,~5\theta,+1),~(\theta,~4\theta+2),~(\theta,~5),~(\theta,~3\theta+6),~(\theta,~2\theta+6),~(\theta,~3\theta+5),~(\theta,~2),\\(\theta,~2\theta+2),~(\theta,~2\theta+3),~(\theta,~4\theta),~(\theta,~3\theta+4),~(\theta,~5\theta+5),~(\theta,~5\theta+4),~(\theta,~3\theta),\\(\theta,~4\theta+3),~(\theta, ~4\theta+ 5),~(\theta, ~ 6\theta + 3),~ (\theta,~ 4),~ (\theta,~ \theta+ 2),~(\theta, ~3\theta+ 2),~(\theta, ~\theta+ 4),~(\theta,~ 3),\\(\theta, ~ 6\theta + 5),~(\theta,~ 3\theta+ 1),~(\theta,~ 6\theta),~(\theta, ~\theta+ 6),~(\theta,~ 4\theta + 4),~(\theta, ~4\theta+ 6),~(\theta,~ \theta),~(\theta, ~ 6\theta + 1),\\(\theta,~ 3\theta+ 3),~(\theta,~ 2\theta+ 1),~(\theta,~ 6),~(\theta, ~5\theta+ 3),~(\theta,~ \theta+3),~(\theta, ~5\theta+ 6),~(\theta,~ 1),~(\theta,~ 2\theta+ 4),~(\theta,~6\theta + 4),$ where $\theta^2+6\theta+3=0$ in $\mathbb{F}_{49}.$

\Cref{r3} and \Cref{cor3.5} together yield the following broader version of \Cref{thm3.1}.
\begin{theorem}
    For any finite field $\mathbb{F}_q$ of order greater than $7,$ a non-exceptional orthomorphism polynomial of degree $7$ exists if and only if $q\in\{11,~13,~17,~19,~25\}.$
\end{theorem}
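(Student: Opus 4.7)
The plan is to deduce the theorem by partitioning on the characteristic $p$ of $\mathbb{F}_q$ and consolidating the results already established in the paper. The key observation is that every non-exceptional orthomorphism polynomial is in particular a non-exceptional permutation polynomial, so the absence of non-exceptional PPs of degree $7$ in any regime automatically rules out non-exceptional OPs there.

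For $p \notin \{2,3,7\}$ I would invoke \Cref{l4}: for $q > 7$, non-exceptional PPs of degree $7$ over $\mathbb{F}_q$ exist precisely for $q \in \{11, 13, 17, 19, 23, 25, 31\}$. Outside this list there are no candidates at all, so nothing further is required. Inside the list, \Cref{thm3.1} has already produced a complete inventory: non-exceptional OPs do exist for $q \in \{11, 13, 17, 19, 25\}$ and do not exist for $q \in \{23, 31\}$.

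For the remaining characteristics, \Cref{r3} disposes of $p = 2$ by combining Shallue--Wanless with the results of Li et al., which together show that no OPs of degree $7$ exist in even characteristic at all, so in particular no non-exceptional ones. For $p = 3$, \Cref{thmm}(1) restricts attention to $\mathbb{F}_{27}$ among fields of characteristic $3$ with $q > 7$, since every degree-$7$ PP over $\mathbb{F}_{3^k}$ with $k \geq 4$ is exceptional; \Cref{cor3.5}(1) then forbids any OP whatsoever over $\mathbb{F}_{27}$. For $p = 7$, \Cref{thmm}(2)--(3) analogously reduces the problem to $\mathbb{F}_{49}$, and \Cref{cor3.5}(2) shows that every degree-$7$ OP over $\mathbb{F}_{49}$, and hence over any $\mathbb{F}_{7^t}$ with $t \geq 2$, is exceptional.

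Concatenating the four cases yields exactly the set $\{11, 13, 17, 19, 25\}$ claimed in the theorem. There is no real technical obstacle at this stage; the substantive work lies upstream in \Cref{thm3.1} and \Cref{cor3.5}. The only mild point to record is that linear composition preserves the non-exceptional/exceptional dichotomy, so that the case-by-case arguments carried out over representatives of each linearly related class transfer to the full set of OPs, which is immediate from the definition of exceptional polynomial and has already been used implicitly in the preceding corollary.
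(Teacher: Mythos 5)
Your proposal reproduces the paper's own derivation: the paper obtains this theorem in a single line by combining \Cref{thm3.1} (characteristic $p\notin\{2,3,7\}$, resting on \Cref{l4}) with \Cref{r3} (characteristic $2$) and \Cref{cor3.5} (characteristics $3$ and $7$), which is exactly your case split on the characteristic, including the observation that linear relatedness preserves (non-)exceptionality. The one point to flag is the case $q=9$: you attribute the reduction to $\mathbb{F}_{27}$ to \Cref{thmm}(1), but as quoted that result covers only $3^k$ with $k\geq 3$, so $\mathbb{F}_9$ is excluded only by the paper's own unproved assertion that characteristic-$3$ fields of order below $27$ need not be considered (justifiable, e.g., by the Niederreiter--Robinson bound that an orthomorphism polynomial has reduced degree at most $q-3$, ruling out degree $7$ over $\mathbb{F}_9$); since this gloss is shared with the paper, your argument is in essence the paper's.
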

\noindent
The following result also appears in \cite{xfan}.
\begin{theorem} \cite{xfan}
    Let $\mathbb{F}_q$ be a finite field, where $q\equiv 6 ~(\textup{mod}~7)$ and $q\notin\{13,~27\}.$ Then any polynomial $f$ of degree $7$ is a PP over $\mathbb{F}_q$ if and only if it is linearly related to $x^7.$
\end{theorem}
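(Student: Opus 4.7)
The plan is to combine the normalisation of \Cref{l3} with the enumerative inputs of \Cref{l4} and \Cref{thmm} and then to kill the residual exceptional candidates using M\"uller's classification of exceptional degree~$7$ PPs cited in \Cref{r1}. The easy direction is immediate: $q\equiv 6\pmod 7$ gives $\gcd(7,q)=1$ and $\gcd(7,q-1)=1$ (since $q-1\equiv 5\pmod 7$), so $x\mapsto x^{7}$ is a bijection of $\mathbb{F}_q$, and any polynomial linearly related to $x^{7}$ is therefore a PP. For the converse I would apply \Cref{l3} to a putative degree~$7$ PP $f$: either $f$ is linearly related to $x^{7}$ (done), or to a normalised $G(x)=x^{7}+\sum_{i=1}^{t}g_ix^{i}$ with $1\le t\le 5$ and $g_t\neq 0$, and it remains to rule the latter out.

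I would split the $G$-case into non-exceptional and exceptional subcases. In the non-exceptional subcase, enumerate characteristics: $2^{k}\bmod 7\in\{1,2,4\}$ and $7^{k}\equiv 0\pmod 7$, so $\mathrm{char}\in\{2,7\}$ is incompatible with $q\equiv 6\pmod 7$; for $\mathrm{char}=3$, $q=3^{k}\equiv 6\pmod 7$ forces $k\equiv 3\pmod 6$, where $k=3$ gives the excluded $q=27$ and $k\ge 9$ is handled by \Cref{thmm}(1), which says every degree~$7$ PP over $\mathbb{F}_{3^{k}}$ is exceptional; and for $\mathrm{char}\notin\{2,3,7\}$, \Cref{l4} forces $q\in\{11,13,17,19,23,25,31\}$, of which only $q=13$ is $\equiv 6\pmod 7$ and this is excluded by hypothesis. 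In the exceptional subcase I would invoke \cite[Theorem~4]{muller}: up to linear equivalence every exceptional PP of degree~$7$ in characteristic $\ne 7$ is $x^{7}$ or a Dickson polynomial $D_{7}(x,a)$ with $a\in\mathbb{F}_q^{\ast}$ (consistent with the tables in \Cref{r1}). The standard criterion says $D_{7}(x,a)$ is a PP iff $\gcd(7,q^{2}-1)=1$; but $q\equiv -1\pmod 7$ gives $q^{2}\equiv 1\pmod 7$, hence $7\mid q^{2}-1$ and no nonzero $a$ works. Only $x^{7}$ survives, contradicting $g_t\neq 0$ in $G$.

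The main obstacle I anticipate is the characteristic~$3$ subcase, where \Cref{l4} provides no information; the argument must lean on Fan's \Cref{thmm}(1) to eliminate non-exceptional PPs over $\mathbb{F}_{3^{k}}$ for $k\ge 9$, after which the uniform Dickson observation cleans up the exceptional residue across all admissible characteristics simultaneously.
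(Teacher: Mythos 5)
Your argument is correct, but it is genuinely different from what the paper does: the paper offers no proof of this statement at all, simply importing it from Fan \cite{xfan} (``The following theorem also appears in \cite{xfan}''), whereas you reconstruct it from the quoted ingredients. Your reduction is sound: the easy direction via $\gcd(7,q)=\gcd(7,q-1)=1$; the characteristic split showing $p=2,7$ are impossible for $q\equiv 6\pmod 7$, that $p=3$ forces $k\equiv 3\pmod 6$ so only $q=27$ (excluded) or $k\ge 9$ where \Cref{thmm}(1) kills non-exceptional PPs, and that for $p\notin\{2,3,7\}$ \Cref{l4} leaves only $q=13$, also excluded (note $q>7$ is automatic, the smallest prime power $\equiv 6\pmod 7$ being $13$); and the exceptional residue eliminated by M\"uller's classification (linearly related to $x^7$ or $D_7(x,a)$, $a\neq 0$) combined with N\"obauer's criterion that $D_7(x,a)$ permutes $\mathbb{F}_q$ iff $\gcd(7,q^2-1)=1$, which fails since $q\equiv -1\pmod 7$. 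Two remarks. First, your route leans on two external facts the paper never states -- the full scope of M\"uller's theorem (all characteristics $\neq 7$, needed for the $\mathbb{F}_{3^k}$, $k\ge 9$ case, where \Cref{thmm}(1) says ``exceptional'' but not which ones) and the Dickson permutation criterion -- so if this argument were to be included, those would have to be cited explicitly; they are standard and consistent with the tables in \Cref{r1}. Second, a framing quibble: in the exceptional subcase, ``only $x^7$ survives'' is not a contradiction but exactly the desired conclusion (that it contradicts $g_t\neq 0$ requires the additional, easy observation that in characteristic $\neq 7$ the normalised form of anything linearly related to $x^7$ is $x^7$ itself, so the two branches of \Cref{l3} are disjoint). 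What your approach buys is a self-contained proof from the paper's stated lemmas; what the paper's citation buys is brevity, at the cost of leaving this theorem entirely dependent on \cite{xfan}.
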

\begin{corollary}
    No orthomorphism polynomials exist over $\mathbb{F}_q;$ where $q\equiv 6 ~(\textup{mod}~7),~q\neq 13.$
\end{corollary}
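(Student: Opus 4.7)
The plan is to split on whether $q = 27$ or not. The case $q = 27$ is already settled by \Cref{cor3.5}(1), so assume $q \equiv 6 \pmod 7$ with $q \notin \{13, 27\}$. For these $q$, the preceding theorem asserts that every PP of degree $7$ over $\mathbb{F}_q$ is linearly related to $x^7$, i.e., of the form $a(bx+c)^7 + d$ for some $a, b \in \mathbb{F}_q^*$ and $c, d \in \mathbb{F}_q$. This extremely tight description of the PPs is the engine of the proof.

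Suppose, for contradiction, that $f$ is an OP of degree $7$ over such an $\mathbb{F}_q$. Since $f(x)$ and $f(x)-x$ share the leading term of $f$, both are PPs of degree $7$, and I can write
\[
f(x) = a(bx+c)^7 + d, \qquad f(x) - x = a'(b'x+c')^7 + d'.
\]
Subtracting produces the polynomial identity
\[
a(bx+c)^7 - a'(b'x+c')^7 = x + d' - d,
\]
whose right-hand side has degree $1$. The strategy is now to force the left-hand side to vanish identically, contradicting the nonzero linear term on the right.

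Comparing the coefficients of $x^7$ and $x^6$ on the left (valid since $\operatorname{char}(\mathbb{F}_q) \neq 7$, as $q \equiv 6 \pmod 7$), I get $ab^7 = a'(b')^7$ and $ab^6 c = a'(b')^6 c'$. A short case split finishes the argument: if $c = 0$, the second equation forces $c' = 0$, so both $(bx+c)^7$ and $(b'x+c')^7$ reduce to multiples of $x^7$ and the left-hand side collapses to $0$; if $c \neq 0$, setting $\mu = b'/b$ one obtains $c' = \mu c$ and $a' = a\mu^{-7}$, whence $a'(b'x+c')^7 = a\mu^{-7}(\mu bx + \mu c)^7 = a(bx+c)^7$ and again the left-hand side vanishes. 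In either case this contradicts $x + d' - d \neq 0$.

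I do not foresee a real obstacle. Once the classification pins every PP of degree $7$ to the one-parameter family $a(bx+c)^7 + d$, what remains is pure coefficient matching. The only subtle point is that the $x^6$-coefficient equation is informative only when $7$ is invertible in $\mathbb{F}_q$, but this is automatic from $q \equiv 6 \pmod 7$; matching just $x^7$ and $x^6$ turns out to be enough, so no arithmetic peculiarity of small characteristics (e.g.\ $\operatorname{char}=2,3,5$) needs separate handling.
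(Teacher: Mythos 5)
Your proof is correct and follows essentially the same route the paper intends: the corollary is stated there without proof, and the natural argument is exactly yours --- by the preceding theorem every degree-$7$ PP over such $\mathbb{F}_q$ has the form $a(bx+c)^7+d$, and comparing the $x^7$ and $x^6$ coefficients (legitimate since $q\equiv 6\ (\mathrm{mod}\ 7)$ forces $7\nmid q$) shows $f$ and $f-x$ cannot both have this form, while the excluded case $q=27$ is correctly deferred to \Cref{cor3.5}. Nothing is missing.
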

\section{Conclusion}
We have determined all orthomorphism polynomials of degree $7$ over finite fields of order $ q\in\{11,~17,~19,~23,~25,~31,~~49\}$ and $q\equiv 6~(\textup{mod}~7),$ thereby extending the existing classification of such polynomials \cite{CPP1, shallue}. As a direction for future work, the remaining orders can be explored. According to \cite{xfan}, to complete the classification of all orthomorphism polynomials of degree $7$, it suffices to examine the finite fields $\mathbb{F}_{7^r},~r\geq 3$ and $\mathbb{F}_q$ with $q> 31,~q\not\equiv 0,~1,~6 ~(\textup{mod}~7).$ For all these cases, only exceptional polynomials will be obtained.
\section{Acknowledgements}
The authors would like to express their sincere gratitude to Prof. Ian  M 
Wanless, Monash University, Melbourne, Australia for his encouragement to pursue research in this area and for providing valuable related references. The fitrst author also acknowledges financial support from the DST-INSPIRE Fellowship, Government of  India (INSPIRE Reg. No. IF230368).
\section{Declarations}
\noindent
\textbf{Conflict of Interest} The authors declare no competing interests.\\
\textbf{Ethical Approval} Not applicable.\\
\textbf{Data Availability} Not applicable.

\end{document}